\newtheorem{mtheorem}{Theorem}
\newtheorem{theorem}{\textbf{Theorem}}[section]
\newtheorem{claim}[theorem]{\textbf{Claim}}
\newtheorem{corollary}{\textbf{Corollary}}
\newtheorem{definition}[theorem]{\textbf{Definition}}
\newtheorem{example}{\textbf{Example}}
\newtheorem{lemma}[theorem]{\textbf{Lemma}}
\newtheorem{remark}[theorem]{\textbf{Remark}}
\numberwithin{equation}{section}
\newcommand{\eqdef}{\stackrel{\scriptscriptstyle\rm def}{=}}
\begin{document}

\title[Synchronization on average]{Random products of maps synchronizing on average}


\author[E. Matias]{Edgar Matias}
\address{Instituto de Matem\'atica, Universidade Federal do Rio de Janeiro, Av. Athos da Silveira Ramos 149, Cidade Universit\'aria - Ilha do Fund\~ao, Rio de Janeiro 21945-909,  Brazil}
\email{edgarmatias9271@gmail.com}

\author[I. Melo]{\'Italo Melo}
\address{Universidade Federal do Piau\' i, Departamento de Matem\' atica, 64049-550,
Ininga - Teresina - PI, Brazil}
\email{italodowell@ufpi.edu.br}

\begin{abstract}
 We present a necessary and sufficient condition for a random product of maps 
on a compact metric space to be (strongly) synchronizing on average.
\end{abstract}


\begin{thanks}{E.M is currently supported by PNPD CAPES.
This work was partially completed while E.M
was supported by PCI award 312487/2016-9 at IMPA and I.M was supported by a CNPq-Brazil doctoral fellowship at IMPA.
The authors warmly thank K. Gelfert and Lorenzo D\' iaz for their useful comments on this paper.  
}\end{thanks}
\keywords{Random product of maps, synchronization, invariant graphs, skew products, invariant measures. }
\subjclass[2000]{37C70, 37H05, 37C40.}

\maketitle
\section{Introduction}
Let $(\Omega,\mathscr{F},\mathbb{P},\theta)$ be a measure-preserving dynamical system and  $(X,d)$ a compact metric space
 (endowed with the Borel $\sigma$-algebra $\mathscr{B}$). Let $f\colon \Omega\times X\to X$ be a  
 $\mathscr{F}\otimes \mathscr{B}/\mathscr{B}$-measurable map.
 A \emph{random product of maps} on $X$  over the measure-preserving dynamical system
 $(\Omega,\mathscr{F},\mathbb{P},\theta)$ 
 is a map $\varphi\colon \mathbb{N}\times \Omega\times X\to X$ 
defined by
$$
\varphi(n,\omega,x)\eqdef f_{\theta^{n-1}(\omega)}\circ\dots \circ f_{\omega}(x)\eqdef f_{\omega}^{n}(x)\quad \text{for}\quad n\geq 1,\
$$
 where $f_{\omega}\colon X\to X$ is a family of maps given by $f_{\omega}(x)\eqdef f(\omega,x)$.  We often refer to $\varphi$ as a random product of maps $f_{\theta^{i}(\omega)}$ on $X$ over the measure-preserving dynamical system
 $(\Omega,\mathscr{F},\mathbb{P},\theta)$.

In this paper we present a necessary and sufficient condition to
obtain synchronization of the random orbits $f_{\omega}^{n}(x)$, that is, random orbits of different initial points converging to each other. 
   The synchronization effect was first observed  by 
 Huygens \cite{Huygens} in the movement of two pendulum clocks hanging from a wall and since then has been investigated in several areas, see \cite{syncrhonizationbook}. Despite the vast literature on this subject, most of the 
 results are experimental. The first rigorous results about synchronization of independent and identically distributed (i.i.d.) random maps goes back to \cite{Antonov, Fur}, and more recently the synchronization on the circle has been intensively studied, see \cite{Katrin,Goro,KlepAntonov,Malicet}.
 

%
%
 
 For random products there are several ways to introduce the notion of synchronization. In this 
 paper we consider convergence of orbits on average. First we recall that a random product $\varphi$ is \emph{synchronizing 
 on average} according to \cite{Goro} if for every $x, y\in X$, for $\mathbb{P}$-almost every $\omega$ it holds 
 \begin{equation}\label{stronsync1}
\lim_{n\to \infty}\frac{1}{n}\sum_{i=0}^{n-1}d(f_{\omega}^{i}(x),f_{\omega}^{i}(y))=0.
\end{equation}
 We introduce the following slightly stronger concept, the difference being a ``change of quantifiers''.
 We say that $\varphi$
 is \emph{strongly synchronizing on average} if 
for $\mathbb{P}$-almost every $\omega$ it holds \eqref{stronsync1}
for every $x, y\in X$. 

 The literature suggests that synchronization is often related with the existence of an
invariant graph. An \emph{invariant map} is a measurable map  $\Phi\colon \Omega\to X$ such that 
 \begin{equation}\label{f1omega}
f_{\omega}(\Phi(\omega))=\Phi(\theta(\omega))
\end{equation}
 for $\mathbb{P}$-almost every $\omega$. The graph of an invariant map is called an \emph{invariant graph}.
We observe that invariant maps sometimes provide relevant information about the random product. 
In some cases an invariant map is given explicitly by 
\begin{equation}\label{inversereversed}
\Phi(\omega)\eqdef\lim_{n\to \infty}f_{\theta^{-1}(\omega)}\circ \dots \circ f_{\theta^{-n}(\omega)}(p)
\end{equation}
where the limit exists for $\mathbb{P}$-almost every $\omega$ in $\Omega$ and it is independent of the point $p\in X$.
For instance, in the theory of contracting iterated functions system
the map $\Phi$ is well-defined for every $\omega\in \Omega$ and (see \cite[Section $3.1$]{Hu}) it is the unique invariant map.
Note that in this case the image of $\Phi$ is the
Hutchinson attractor and its distribution is the unique stationary
measure (supported on this attractor)\footnote{Usually this fact is stated using the coordinate map $\Phi^{+}$ as defined in \eqref{reverseorder}. Note that $\Phi^{+}$ is not an invariant map, but it satisfies an invariance equation, that is, $f_{\omega}(\Phi^{+}(\theta(\omega)))=\Phi^{+}(\omega)$ which is the analogous of \eqref{f1omega} for random products over non-invertible transformations.}.
This indicates the importance of
invariant maps for the description of 
dynamical properties of the system.
 To mention some more general results in the i.i.d. case for systems which are non-contracting,
perhaps one of the first in this spirit is due to Letac \cite{Letac}.  The key hypothesis in
\cite{Letac} is that  the map in \eqref{inversereversed} is well-defined  for  
$\mathbb{P}$-almost every $\omega$ in $\Omega$ and it is proved that the distribution of $\Phi$ 
is the unique stationary measure. 
We point out that the property $\Phi$ is well-defined (almost everywhere) is, in
fact, a \emph{consequence} of much weaker topological conditions  such
as the so-called \emph{splitting property} in \cite{DiazMatias} or the
very classical condition of being \emph{contracting on average} in
\cite{Recurrent}.
For further results on the existence of  invariant maps see the comments which follow
Theorem \ref{Dirac}.

In this paper we show that the strong synchronization on average is equivalent to the existence 
of an invariant graph and a unique $\varphi$-invariant measure (see the definition below), see Theorem \ref{Dirac} and Theorem \ref{contractible}.
In general, the uniqueness of a $\varphi$-invariant measure alone does not imply 
any type of synchronization of orbits (see Example \ref{exemple} in Section \ref{sectionsync} ). Thus, it is quite surprising that the existence 
of an invariant map ``forces'' the synchronization. 

The strong synchronization (on average) and the existence of a measurable invariant map
give rise to a globally attracting graph (on average). We show that this attractor is  measurable with respect to the ``past'' of the random product.
As a consequence we can produce a large class of random products satisfying a phenomenon 
called the \emph{vanishing attractor}, see \cite{Misiu2, Misiu} and Corollary \ref{vanishing}. 
 We also use Theorem \ref{Dirac} to state a 
version of Letac's principle \cite{Letac} for non i.i.d. random products on compact metric spaces, see Theorem \ref{Strong}.

  This paper is organized as follows. In Section \ref{s.main} we state the main definitions and the precise 
  statements of our results. In Section \ref{s.existence} we prove Theorem \ref{Dirac} and
   Corollary \ref{vanishing}.  In Section \ref{s.sufficient} we prove Theorems \ref{Breimanmorestrong} and
   \ref{contractible}. Theorem \ref{Breimanmorestrong} is used in the proof of Theorem \ref{contractible} and it can 
   be seen as a weak version of Breiman's ergodic theorem for random products. Finally, Section \ref{s.strong} is devoted to the proof of Theorem \ref{Strong}.

\section{Main results}\label{s.main}

 \subsection{Existence of invariant graphs in the invertible case} \label{existenceinvariantgraphs}
 In the following we take an alternative point of view of random products.
A random product $\varphi$ of maps $f_{\theta^{i}(\omega)}$ on a compact metric space $X$ over a measure-preserving dynamical system
$(\Omega,\mathscr{F},\mathbb{P},\theta)$ induces a \emph{skew product} $F_{\varphi}\colon \Omega\times X\to \Omega\times X$  
defined by
\begin{equation}\label{defiskewproduct}
F_{\varphi}(\omega,x)\eqdef(\theta(\omega),f_{\omega}(x)).
\end{equation}
The map $\theta$ is called the \emph{base map} and the maps $f_{\omega}$ are called the \emph{fiber maps} of the 
skew product $F_{\varphi}$. Note that the second coordinate of $F_{\varphi}^{n}(\omega,x)$ is given by $\varphi(n,\omega,x)$.

  We show that the property of strong synchronization on average implies 
 the existence of a unique (a.e.) invariant map.
 Our approach is to study the space of $\varphi$-\emph{invariant measures}. 
 A probability measure $\mu$ on $\Omega\times X$ 
is called \emph{$\varphi$-invariant} if it is $F_{\varphi}$-invariant and satisfies $(\Pi_{1})_{*}\mu=\mathbb{P}$,
where $\Pi_{1}$ is the projection on the first factor, that is, $\Pi_{1}(\omega,x)=\omega$.
 The space of $\varphi$-invariant measures is denoted by $I_{\mathbb{P}}(\varphi)$. We denote by 
 $\mathcal{M}_{\mathbb{P}}(\Omega\times X)$ the space of probability measures of  $\Omega\times X$ such that 
 $(\Pi_{1})_{*}\mu=\mathbb{P}$. We observe that $I_{\mathbb{P}}(\varphi)\neq \emptyset$ when
 $\Omega$ is a complete metric space, $X$ is a compact 
 metric space and  $\varphi$ is a \emph{continuous random product}, that is, each map $f_{\omega}$ is continuous, see \cite[Theorem 1.5.10]{Arnold}.

In general, for random  products that are synchronizing on average we cannot perform an explicit construction of the invariant map. However, we can obtain additional information of such maps.
 In order to precisely state what we mean by that, we need a few more definitions. 
  Given a random variable $X$ denote by $\sigma(X)$ the $\sigma$-algebra generated by $X$. Given a random product $\varphi$ over an invertible measure-preserving dynamical system 
$(\Omega,\mathscr{F},\mathbb{P},\theta)$ 
  the \emph{past} of $\varphi$ is defined as the sub $\sigma$-algebra
$$
\mathscr{F}^{-}\eqdef\sigma(\omega\mapsto \varphi(n,\theta^{-n}(\omega),x)\colon n\geq 1 , x\in X), 
$$ 
see \cite{Crauel} for details. In other words, $\mathscr{F}^{-}$ is the smallest $\sigma$-algebra that makes
all maps in the above family measurable.

 \begin{mtheorem}\label{Dirac}
 Let $\varphi$ be a random product of maps $f_{\theta^{i}(\omega)}$ on a compact metric space
 $X$ over an invertible measure-preserving dynamical system $(\Omega,\mathscr{F},\mathbb{P},\theta)$ and suppose that 
 $I_{\mathbb{P}}(\varphi)\neq \emptyset$. If $\varphi$ is strongly synchronizing on average, then $I_{\mathbb{P}}(\varphi)$ is a singleton and there exist 
a unique (a.e.) invariant map $\Phi\colon \Omega\to X$. Moreover, the map $\Phi$ is $\mathscr{F}^{-}$-measurable and
 its graph is an attractor for $F_{\varphi}$ in the sense that for $\mathbb{P}$-almost every $\omega$
  $$
 \lim_{n\to \infty} \frac{1}{n}\sum_{i=0}^{n-1}
d(f_{\omega}^{i}(x),\Phi(\theta^{i}(\omega))) = 0,
 $$
  for every $x\in X$.
 
 \end{mtheorem}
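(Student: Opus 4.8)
The plan is to exploit the synchronization hypothesis to force all $\varphi$-invariant measures to coincide, and then to disintegrate the unique invariant measure to produce the invariant map $\Phi$. First I would show that $I_{\mathbb{P}}(\varphi)$ has at most one element. Suppose $\mu_1,\mu_2\in I_{\mathbb{P}}(\varphi)$. Disintegrating over $\mathbb{P}$ gives families of fiber measures $(\mu_1)_\omega$ and $(\mu_2)_\omega$ on $X$. Consider the measure $\mu_1\otimes_{\mathbb{P}}\mu_2$ on $\Omega\times X\times X$ whose $\omega$-fiber is $(\mu_1)_\omega\times(\mu_2)_\omega$; this is invariant under the natural skew product $\widehat F(\omega,x,y)=(\theta\omega,f_\omega(x),f_\omega(y))$, and projects to $\mathbb{P}$ on $\Omega$. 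Apply Birkhoff's ergodic theorem to the observable $(\omega,x,y)\mapsto d(x,y)$: for $(\mu_1\otimes_{\mathbb{P}}\mu_2)$-a.e. $(\omega,x,y)$ the Cesàro averages $\frac1n\sum_{i=0}^{n-1}d(f_\omega^i(x),f_\omega^i(y))$ converge, and by strong synchronization on average they converge to $0$ for $\mathbb{P}$-a.e.\ $\omega$ and every pair $(x,y)$, hence a.e.\ for the product measure. Integrating, $\int d(x,y)\,d(\mu_1\otimes_{\mathbb{P}}\mu_2)=0$, so $(\mu_1)_\omega=(\mu_2)_\omega$ is a.e.\ a Dirac mass; in particular $\mu_1=\mu_2$. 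Combined with the hypothesis $I_{\mathbb{P}}(\varphi)\neq\emptyset$, this gives a unique $\varphi$-invariant measure $\mu$.

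Next I would extract the invariant map. The same argument applied with $\mu_1=\mu_2=\mu$ shows that the fiber measures $\mu_\omega$ of the unique invariant measure are $\mathbb{P}$-a.e.\ Dirac: indeed $\int\!\!\int d(x,y)\,d\mu_\omega(x)\,d\mu_\omega(y)\,d\mathbb{P}(\omega)=0$, so $\mu_\omega=\delta_{\Phi(\omega)}$ for some point $\Phi(\omega)\in X$, and $\omega\mapsto\Phi(\omega)$ is measurable (the barycenter, or any measurable selection, works). The $F_\varphi$-invariance of $\mu$ translates directly into $f_\omega(\Phi(\omega))=\Phi(\theta\omega)$ for $\mathbb{P}$-a.e.\ $\omega$, so $\Phi$ is an invariant map; its a.e.-uniqueness follows because any invariant map $\Psi$ gives an invariant measure $\omega\mapsto\delta_{\Psi(\omega)}$, which must equal $\mu$.

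For the attractor property, fix $x\in X$ and apply Birkhoff to the function $g(\omega,z)=d(z,\Phi(\theta\omega))$ — better, use the synchronization directly: for $\mathbb{P}$-a.e.\ $\omega$, picking $y$ in the support of $\mu_\omega$ is not available since that support is $\{\Phi(\omega)\}$ and $\Phi(\omega)$ need not help bound arbitrary $x$; instead observe that $\frac1n\sum_{i=0}^{n-1}d(f_\omega^i(x),\Phi(\theta^i\omega))=\frac1n\sum_{i=0}^{n-1}d(f_\omega^i(x),f_\omega^i(\Phi(\omega)))$ using the invariance relation iterated, and this tends to $0$ by strong synchronization on average applied to the pair $x$ and $\Phi(\omega)$. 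This is where strong (rather than ordinary) synchronization is essential: we need the full-measure set of good $\omega$ to be independent of the second point, so that it applies to the $\omega$-dependent point $\Phi(\omega)$. Finally, the $\mathscr{F}^{-}$-measurability of $\Phi$: one shows $\Phi(\omega)=\lim_{n}f^n_{\theta^{-n}\omega}(x)$ exists (on average / in measure) independently of $x$ — each approximant $\omega\mapsto f^n_{\theta^{-n}(\omega)}(x)$ is $\mathscr{F}^{-}$-measurable by definition of the past, and $\mathscr{F}^{-}$ is complete, so the limit is too. The main obstacle I anticipate is precisely this last point: upgrading the Cesàro/in-measure convergence of the pulled-back orbits to an honest a.e.\ limit identifying $\Phi$, which likely needs an argument that the sequence $f^n_{\theta^{-n}\omega}(x)$ is a.e.\ asymptotically Cauchy (e.g.\ via a martingale-type or reverse-martingale convergence argument on conditional expectations with respect to the increasing $\sigma$-algebras $\mathscr{F}^{-}_n$), rather than merely that its averages vanish against $\Phi$.
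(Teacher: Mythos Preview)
Your argument for uniqueness of the $\varphi$-invariant measure and for the Dirac form of its disintegration is correct and essentially the same as the paper's, just packaged differently: the paper introduces the functional $D(m)=\iint d(x,y)\,dm(x)\,dm(y)$, shows $D(\mu_\omega)=0$ for \emph{any} invariant $\mu$ (so every invariant measure has Dirac fibers), and then derives uniqueness via the convexity trick $\frac{\mu+\nu}{2}\in I_{\mathbb{P}}(\varphi)$. Your fiber-product construction $\mu_1\otimes_{\mathbb{P}}\mu_2$ does both steps at once and is arguably cleaner. Note that the reference to Birkhoff is unnecessary in your write-up: the strong synchronization hypothesis already identifies the a.e.\ limit of the Ces\`aro averages as $0$, and then dominated convergence plus $\widehat F$-invariance of $\mu_1\otimes_{\mathbb{P}}\mu_2$ gives $\int d(x,y)\,d(\mu_1\otimes_{\mathbb{P}}\mu_2)=0$ directly. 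The extraction of $\Phi$, its invariance, its a.e.\ uniqueness, and the attractor property are handled exactly as in the paper.

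The only real gap is the $\mathscr{F}^{-}$-measurability of $\Phi$, and here the paper's solution is much simpler than the martingale route you are contemplating. From the attractor property and dominated convergence one gets
\[
\lim_{n\to\infty}\frac{1}{n}\sum_{i=0}^{n-1}\int d\bigl(f_\omega^{i}(x),\Phi(\theta^{i}\omega)\bigr)\,d\mathbb{P}(\omega)=0,
\]
and by $\theta$-invariance of $\mathbb{P}$ each integral equals $\int d\bigl(f_{\theta^{-i}\omega}^{i}(x),\Phi(\omega)\bigr)\,d\mathbb{P}(\omega)$. So a sequence of \emph{nonnegative} numbers has Ces\`aro averages tending to $0$; this forces a subsequence $(n_k)$ along which the terms themselves tend to $0$, i.e.\ $f_{\theta^{-n_k}\omega}^{n_k}(x)\to\Phi(\omega)$ in $L^{1}(\mathbb{P})$. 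Passing to a further subsequence gives a.e.\ convergence, exhibiting $\Phi$ as a $\mathbb{P}$-a.e.\ limit of the $\mathscr{F}^{-}$-measurable maps $\omega\mapsto f_{\theta^{-n_k}\omega}^{n_k}(x)$. No martingale or Cauchy-type argument is needed; the two successive subsequence extractions (Ces\`aro $\to 0$ for nonnegative terms, then $L^1\to$ a.e.) are all that is required.
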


Among the few general results about existence of invariant maps is one in 
\cite{Stark}, which we briefly describe now. Assuming that $\theta$ is a homeomorphism 
defined on a compact metric space $\Omega$, $X$ is a complete metric space and considering a random product of 
 Lipschitz maps on $X$ over $\theta$ with \emph{maximal Lyapunov
exponent} (the exponential growth rate of the Lipschitz constant) bounded by a negative constant, it is proved in \cite{Stark} the existence of a unique (a.e.) invariant graph. Note that
the assumption on the Lyapunov exponent immediately implies \emph{exponential synchronization  of orbits}, that is, there is a measurable 
map $c\colon \Omega\to \mathbb{R}^{+}$ and a constant $\rho<1$ such that for $\mathbb{P}$-almost every $\omega$ it holds
$$
d(f_{\omega}^{n}(x), f_{\omega}^{n}(y))\leq c(\omega)\rho^{n}d(x,y)
$$
for every $x,y\in X$ and every $n\geq 0 $.

In Theorem \ref{Dirac} we only assume the existence of a $\varphi$-invariant measure and 
convergence of orbits on average (we do not require exponential convergence). 
Moreover, the maps $f_{\omega}$ are only required to be measurable and the space $\Omega$ is 
any measurable space. 

To finish this discussion, note that \cite{JagerAnagno} considered the case
of a skew product with a minimal homeomorphism on a compact metric
space $\Omega$ in the base and with differentiable fiber maps in
$\mathbb R^n$. 
Considering a random compact invariant set $K\subset\Omega\times\mathbb R^n$
and assuming only that any invariant measure supported on $K$ has a
negative top Lyapunov exponent it is proved that the set $K$ is the finite union of
(possibly multi-valued) continuous invariant graphs. A somewhat more
general setting of random minimal base maps is studied in
\cite{Jager}.

\subsubsection{The Vanishing attractor}\label{ss.vanishing}

In this section we consider  i.i.d. random
  products of maps.
   Namely, let  $(\Sigma_{0},\mathscr{E},\nu)$ be 
a probability space and let $\mathbb{T}$ stand for either $\mathbb{Z}=\{0,\pm{1},\pm{2} ,\dots\}$ or $\mathbb{N}=\{0,1,2,\dots\}$. Consider the space of sequences 
$\Sigma_{0}^{\mathbb{T}}$ endowed 
with the product $\sigma$-algebra $\mathscr{E}^{\mathbb{T}}$ 
and the product measure $\nu^{\mathbb{T}}$.
Denote by $\sigma$ the shift map on the space $\Sigma_{0}^{\mathbb{T}}$, that is, $\sigma$ is defined
 by $\sigma((\omega_{i})_{i\in \mathbb{T}})=(\omega_{i+1})_{i\in \mathbb{T}}$. 
 The shift map is a measure-preserving transformation 
of the probability space $(\Sigma_{0}^{\mathbb{T}},\mathscr{E}^{\mathbb{T}},\nu^{\mathbb{T}})$.
The measure-preserving dynamical systems $(\Sigma,\mathscr{F},\mathbb{P},\sigma)\eqdef(\Sigma_{0}^{\mathbb{Z}},\mathscr{E}^{\mathbb{Z}},\nu^{\mathbb{Z}},\sigma)$ and $(\Sigma^{+},\mathscr{F}^{+},\mathbb{P}^{+},\sigma)\eqdef(\Sigma_{0}^{\mathbb{N}},\mathscr{E}^{\mathbb{N}},\nu^{\mathbb{N}},\sigma)$ are called \emph{two-sided Bernoulli shift} and \emph{one-sided Bernoulli shift}, respectively.

Let $X$ be a compact metric space and consider a measurable map $f\colon \Sigma_{0}^{\mathbb{T}}\times X\to X$ such that 
 the map $f_{\omega}$ (recall that $f_{\omega}(x)\eqdef f(\omega,x)$) depends only on the zeroth coordinate of $\omega\in \Sigma_{0}^{\mathbb{T}}$.
Then the map $\varphi\colon \mathbb{N}\times \Sigma_{0}^{\mathbb{T}}\times X\to X$ defined by
\begin{equation}\label{zzz}
\varphi(n,\omega,x)\eqdef f_{\sigma^{n-1}(\omega)}\circ\cdots\circ f_{\omega}(x)
\end{equation}
 is called an \emph{independent and 
identically distributed (i.i.d.) random product of maps} (over the one-sided or the two-sided Bernoulli shift).

In \cite{Misiu} the authors observe an interesting phenomenon that they call the \emph{mystery 
of the vanishing attractor}. They present an example of an i.i.d. random product 
having a measurable attracting graph which vanishes when the past is forgotten, in the following sense: 
let $f\colon \Sigma\times X\to X$ be a measurable map and 
 consider the i.i.d. random product of maps $\varphi$ over the two-sided Bernoulli shift as 
defined in \eqref{zzz}. 
 Let $\Pi^{+}\colon \Sigma\to \Sigma^{+}$ be the natural projection defined by 
\begin{equation}\label{naturalproj}
\Pi^{+}((\omega_{i})_{i\in\mathbb{Z}})\eqdef(\omega_{i})_{i\in\mathbb{N}}.
\end{equation}
 Associated with $f$ there is a unique map $f^{+}\colon
\Sigma^{+}\times X\to X$ such that $f(\omega,x)=f^{+}(\Pi^{+}(\omega),x)$ for every $(\omega,x)\in \Sigma\times X$.
Hence, we can define a new i.i.d. random product of maps $\varphi^{+}$ over the one-sided Bernoulli shift. 

The passage from $\varphi$ to $\varphi^{+}$ is understood as \emph{forgetting the past} of $\varphi$, 
and the phenomenon of the vanishing attractor described in \cite{Misiu2, Misiu} occurs when $F_{\varphi}$ has an 
attracting graph and $F_{\varphi^{+}}$ does not. This fact may, at first, seem to be counterintuitive since 
attractors are limit sets of forward iterations and any orbit of a point depends only on its
 initial point.

However, below we present a large class of random products having an attractor 
 that vanishes when its past is forgotten. We show that the synchronization of orbits
is a mechanism 
  to produce such a
 phenomenon, because it implies that the invariant graph is 
 measurable with respect to the past of the random product. In this way the vanishing of the attractor will be a natural consequence of the fact that in the i.i.d. case, past and future are independent.

Following \cite{Misiu} we consider a notion of attractor that a priori does not require invariance.
Given an i.i.d. random product $\varphi$ of maps $f_{\sigma^{i}(\omega)}$ over a Bernoulli shift
(two-sided or one-sided) and a measurable map $\Psi\colon\Sigma_{0}^{\mathbb{T}}\to X$ (not necessarily an invariant map)
the \emph{basin of attraction on average} of the set  $\mathrm{graph}\, \Psi$
is defined by
$$
\mathsf{B}_{\Psi}\eqdef\{(\omega,x)\in \Omega\times X\colon \lim_{n\to \infty} \frac{1}{n}\sum_{i=0}^{n-1}
d(f_{\omega}^{i}(x),\Psi(\sigma^{i}(\omega))) = 0 \}.
$$
For every $\omega$, the $\omega$-\emph{section} of $\mathsf{B}_{\Psi}$ is defined by 
$(\mathsf{B}_{\Psi})_{\omega}\eqdef\{x\in X\colon (\omega,x)\in \mathsf{B}_{\Psi}\}$.
The set $\mathrm{graph}\, \Psi$ is called an \emph{attractor on average} for $F_{\varphi}$ if  
 $(\mathsf{B}_{\Psi})_{\omega}$ is a neighbourhood of $\Psi(\omega)$ for $\mathbb{P}$-almost every $\omega$.  If  $(\mathsf{B}_{\Psi})_{\omega}=X$ for $\mathbb{P}$-almost every $\omega$
  then we say that $\mathrm{graph}\, \Psi$ is a \emph{global attractor on average}.

\begin{corollary}[The vanishing attractor]\label{vanishing}
Consider an i.i.d. random product $\varphi$ of maps $f_{\sigma^{i}(\omega)}$ on a compact metric space $X$ over the two-sided 
shift $(\Sigma,\mathscr{F},\mathbb{P},\sigma)$ and suppose that $\varphi$ is strongly synchronizing on average and 
 the maps $f_{\omega}$ have no common fixed point ($\mathbb{P}$-a.e.).
Let $\varphi^{+}$ be the associated i.i.d. random product over the one-sided Bernoulli shift 
$(\Sigma^{+},\mathscr{F}^{+},\mathbb{P}^{+},\sigma)$.
  Then there is no measurable map $\Phi^{+}\colon \Sigma^{+}\to X$ whose graph is an attractor on average for $F_{\varphi^{+}}$.
\end{corollary}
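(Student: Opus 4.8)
The plan is to argue by contradiction, using Theorem \ref{Dirac} to pin down the two-sided attracting graph and the independence of past and future in the i.i.d.\ setting to reach an absurdity. Suppose there were a measurable map $\Phi^{+}\colon\Sigma^{+}\to X$ whose graph is an attractor on average for $F_{\varphi^{+}}$, and set $\Psi\eqdef\Phi^{+}\circ\Pi^{+}\colon\Sigma\to X$. Because $f_{\omega}=f^{+}_{\Pi^{+}(\omega)}$ and $\Pi^{+}\circ\sigma=\sigma\circ\Pi^{+}$, one checks that $f_{\omega}^{i}(x)=(f^{+})^{i}_{\Pi^{+}(\omega)}(x)$ for all $i,\omega,x$, hence $(\mathsf{B}_{\Psi})_{\omega}=(\mathsf{B}_{\Phi^{+}})_{\Pi^{+}(\omega)}$. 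Since $(\Pi^{+})_{*}\mathbb{P}=\mathbb{P}^{+}$, it follows that $(\mathsf{B}_{\Psi})_{\omega}$ is a neighbourhood of $\Psi(\omega)$ for $\mathbb{P}$-a.e.\ $\omega$, i.e.\ $\mathrm{graph}\,\Psi$ is an attractor on average for $F_{\varphi}$; note also that $\Psi$ is measurable with respect to $\mathscr{A}_{+}\eqdef\sigma(\omega\mapsto\omega_{i}\colon i\geq 0)$.

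Now I would apply Theorem \ref{Dirac} to $\varphi$ (strongly synchronizing on average): there is a unique (a.e.) invariant map $\Phi$, it is $\mathscr{F}^{-}$-measurable, and $\mathrm{graph}\,\Phi$ is a \emph{global} attractor on average, so $(\mathsf{B}_{\Phi})_{\omega}=X$ for $\mathbb{P}$-a.e.\ $\omega$. Then for $\mathbb{P}$-a.e.\ $\omega$ we have $\Psi(\omega)\in(\mathsf{B}_{\Phi})_{\omega}$ and, since $(\mathsf{B}_{\Psi})_{\omega}$ is a neighbourhood of $\Psi(\omega)$, also $\Psi(\omega)\in(\mathsf{B}_{\Psi})_{\omega}$; the triangle inequality gives $\frac1n\sum_{i=0}^{n-1}d\big(\Psi(\sigma^{i}\omega),\Phi(\sigma^{i}\omega)\big)\to 0$ for $\mathbb{P}$-a.e.\ $\omega$. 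Integrating, using $\sigma$-invariance of $\mathbb{P}$ and dominated convergence (recall $X$ is bounded), the left-hand side has constant integral $\int d(\Psi,\Phi)\,d\mathbb{P}$, which must therefore vanish; hence $\Psi=\Phi$ $\mathbb{P}$-a.e.

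To conclude, I would use that in the i.i.d.\ case $\varphi(n,\sigma^{-n}(\omega),x)=f_{\sigma^{-1}(\omega)}\circ\cdots\circ f_{\sigma^{-n}(\omega)}(x)$ depends only on $\omega_{-1},\dots,\omega_{-n}$, so $\mathscr{F}^{-}\subseteq\mathscr{A}_{-}\eqdef\sigma(\omega\mapsto\omega_{i}\colon i\leq -1)$; thus $\Phi$ is $\mathscr{A}_{-}$-measurable while, being a.e.\ equal to $\Psi$, it agrees off a null set with an $\mathscr{A}_{+}$-measurable map. Since $\mathscr{A}_{-}$ and $\mathscr{A}_{+}$ are independent under the product measure $\mathbb{P}$, a random variable measurable with respect to one of them and a.e.\ equal to one measurable with respect to the other is a.e.\ constant (indeed $\mathbb{P}(\Phi\in A)=\mathbb{P}(\Phi\in A)^{2}$ for every Borel $A\subseteq X$); hence $\Phi\equiv c$ $\mathbb{P}$-a.e.\ for some $c\in X$. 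The invariance equation $f_{\omega}(\Phi(\omega))=\Phi(\sigma(\omega))$ then forces $f_{\omega}(c)=c$ for $\mathbb{P}$-a.e.\ $\omega$, i.e.\ $c$ is a common fixed point of the fiber maps, contradicting the hypothesis. This proves the corollary.

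The heart of the argument is the identity $\Psi=\Phi$ a.e.: this is where strong synchronization on average really enters, through the \emph{global} attracting-graph conclusion of Theorem \ref{Dirac}, and it is the step I expect to require the most care (in particular the interchange of limit and integral and the reduction to $\int d(\Psi,\Phi)\,d\mathbb{P}=0$). The remaining ingredients---locating $\mathscr{F}^{-}$ inside the strict past $\mathscr{A}_{-}$ and deducing a.e.\ constancy from past/future measurability via independence---are routine.
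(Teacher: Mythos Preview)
Your argument is correct and follows essentially the same route as the paper: argue by contradiction, pull $\Phi^{+}$ back to $\Psi=\Phi^{+}\circ\Pi^{+}$, invoke Theorem~\ref{Dirac} to obtain the $\mathscr{F}^{-}$-measurable global attracting graph $\Phi$, use the triangle inequality and dominated convergence together with $\sigma$-invariance of $\mathbb{P}$ to force $\int d(\Psi,\Phi)\,d\mathbb{P}=0$, and finally exploit past/future independence to conclude $\Phi$ is a.e.\ constant, contradicting the absence of a common fixed point. The only cosmetic difference is that you work with the coordinate $\sigma$-algebras $\mathscr{A}_{-}=\sigma(\omega_{i}:i\le -1)$ and $\mathscr{A}_{+}=\sigma(\omega_{i}:i\ge 0)$, whereas the paper phrases the independence step via its ``future'' $\sigma$-algebra $\mathscr{F}^{+}$; your formulation is arguably more transparent, since the inclusions $\mathscr{F}^{-}\subseteq\mathscr{A}_{-}$ and $\Psi$ being $\mathscr{A}_{+}$-measurable are immediate in the i.i.d.\ setting.
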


\begin{remark}\emph{
In particular, if $\varphi$ is \emph{strongly synchronizing}, that is, if  for $\mathbb{P}$-almost every $\omega$ it holds 
$$
\lim_{n\to \infty} d(f_{\omega}^{i}(x),f_{\omega}^{i}(y))=0
$$
for every $x, y\in X$, 
then the conclusion of the corollary holds. The class of i.i.d. random products which are strongly synchronizing certainly
includes i.i.d. random products of uniform contracting maps. However, there is a huge class of 
random product of maps without obvious contraction-like properties which are also strongly synchronizing, see \cite[Corollary $2.11$]{Malicet} and \cite[Theorem $3$]{DiazMatias} }.
\end{remark}
  
\subsection{Sufficient conditions for strong synchronization}\label{sectionsync}
In this section we state that random products $\varphi$ on compact metric spaces having 
 an invariant map and a unique $\varphi$-invariant measure is strongly synchronizing on average. To do so, we shall assume that the induced skew product is continuous.

 We start by stablishing a version of the Breiman ergodic theorem \cite{Breiman} for random products having a unique $\varphi$-invariant measure. Throughout, if $\Omega$ is a compact metric space then the $\sigma$-algebra 
 considered in any measure-preserving dynamical system $(\Omega,\mathscr{F},\mathbb{P},\theta)$ is the Borel $\sigma$-algebra of $\Omega$. The convergence of a sequence of Borel probability measures is always considered in the weak$*$-topology.

\begin{mtheorem}[Strong Ergodic theorem]\label{Breimanmorestrong}
Let $\Omega$ be a compact metric space and consider a random product of maps $\varphi$ on a compact metric space
 $X$ over an invertible and ergodic measure-preserving dynamical system $(\Omega,\mathscr{F},\mathbb{P},\theta)$. Suppose that the skew product $F_{\varphi}$ induced by $\varphi$ is continuous and there is a unique $\varphi$-invariant measure $\mu$. Then 
for $\mathbb{P}$-almost every $\omega$ and every $x\in X$ we have 
$$
\lim_{n\to \infty}\frac{1}{n}\sum_{i=0}^{n-1}\delta_{F_{\varphi}^{i}(\omega,x)}=\mu.
$$ 

\end{mtheorem}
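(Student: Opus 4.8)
The plan is to use the standard ergodic-theoretic characterization of unique ergodicity, transplanted to the skew-product setting where the base measure $\mathbb{P}$ is fixed. Fix a point $x \in X$ and consider the empirical measures $\mu_n^{(\omega,x)} \eqdef \frac{1}{n}\sum_{i=0}^{n-1}\delta_{F_\varphi^i(\omega,x)}$ on the compact metric space $\Omega \times X$. By compactness, along every subsequence these have a further weak$*$-convergent subsequence, so it suffices to show that every weak$*$-limit point equals $\mu$, for $\mathbb{P}$-almost every $\omega$. First I would record the two properties any such limit point $\nu$ must satisfy: (i) $\nu$ is $F_\varphi$-invariant — this is the classical Krylov–Bogolyubov argument, which goes through because $F_\varphi$ is continuous (the pushforward $\frac{1}{n}\sum \delta_{F^{i+1}(\omega,x)}$ differs from $\mu_n^{(\omega,x)}$ by two Dirac masses, hence has the same limit, and continuity of $F_\varphi$ lets us pass $(F_\varphi)_*$ through the weak$*$-limit); and (ii) $(\Pi_1)_*\nu = \mathbb{P}$. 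Granting (i) and (ii), $\nu \in I_{\mathbb{P}}(\varphi)$, and uniqueness forces $\nu = \mu$; since the limit point is independent of the subsequence, the full sequence converges to $\mu$.

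The delicate point, and the one I expect to be the main obstacle, is property (ii): $(\Pi_1)_*\mu_n^{(\omega,x)} = \frac{1}{n}\sum_{i=0}^{n-1}\delta_{\theta^i(\omega)}$, and this converges weak$*$ to $\mathbb{P}$ only for $\mathbb{P}$-almost every $\omega$ — precisely the $\omega$ in the full-measure set supplied by the ergodic theorem applied to $(\Omega,\mathbb{P},\theta)$ together with a countable dense family of continuous test functions on $\Omega$ (unique ergodicity is \emph{not} assumed on the base, only ergodicity). So I would fix at the outset this full-measure set $\Omega_0 \subseteq \Omega$ on which $\frac{1}{n}\sum_{i=0}^{n-1}\delta_{\theta^i(\omega)} \to \mathbb{P}$; for $\omega \in \Omega_0$, any weak$*$-limit point $\nu$ of $\mu_n^{(\omega,x)}$ then projects to $\mathbb{P}$ under $\Pi_1$, because $\Pi_1$ is continuous. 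Combined with (i), this gives $\nu \in I_{\mathbb{P}}(\varphi) = \{\mu\}$ for every $\omega \in \Omega_0$ and every $x \in X$, which is the assertion.

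One technical subtlety worth flagging: the argument as stated yields, for each fixed $x$, convergence for $\mathbb{P}$-a.e.\ $\omega$ (on the set $\Omega_0$, which does not depend on $x$), and indeed the conclusion "for $\mathbb{P}$-a.e.\ $\omega$ and every $x$" follows immediately since $\Omega_0$ is a single full-measure set working simultaneously for all $x$ — the limit-point analysis above used nothing about $x$ beyond $\omega \in \Omega_0$. I would also note that invariance of the limit point in step (i) only needs $F_\varphi$ continuous on $\Omega \times X$, which is hypothesized, and compactness of $\Omega \times X$ (product of two compacta) to guarantee that the empirical measures form a weak$*$-precompact family and that $I_\mathbb{P}(\varphi) \neq \emptyset$ is consistent. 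No appeal to invertibility of $\theta$ is actually needed for this theorem, though it is harmless to keep it in the hypotheses for uniformity with the rest of Section \ref{s.sufficient}.
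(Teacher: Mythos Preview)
Your proposal is correct and follows essentially the same route as the paper: fix the full-measure set $\Omega_0$ of base-generic points where the empirical measures of $\theta$ converge to $\mathbb{P}$, then argue that for $\omega\in\Omega_0$ every weak$*$-accumulation point of $\mu_n^{(\omega,x)}$ is $F_\varphi$-invariant and projects to $\mathbb{P}$, hence equals $\mu$ by uniqueness. Your write-up is in fact more explicit than the paper's on the Krylov--Bogolyubov step and on why a single $\Omega_0$ works for all $x$ simultaneously.
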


\begin{remark}\emph{Let $\varphi$ be an i.i.d. random product of maps $f_{\sigma^{i}(\omega)}$ on a compact metric space $X$
over the two-sided shift $(\Sigma,\mathscr{F},\mathbb{P},\sigma)$, as defined in Section \ref{ss.vanishing}. The family of sequences of random variables $(X^{x}_{n})_{x\in X}$ defined by $X_{n}^{x}(\omega)=f_{\omega}^{n}(x)$ is a family of Markov chains with common transition 
probability, for more details see \cite[Theorem 2.1.4]{Arnold}. 
If there is a unique $\varphi$-invariant measure $\mu$ then this family of Markov chains has a unique stationary measure 
given by $m=\Pi_{2*}\mu$, where $\Pi_{2}$ is the natural projection on the second factor, that is, $\Pi_{2}(\omega,x)=x$ (for details see \cite[Theorem 2.1.8]{Arnold}). Therefore, for every $x$ we can apply the
 Breiman ergodic theorem \cite{Breiman} to obtain  a set $\Sigma^{x}$ with $\mathbb{P}$-full measure such that 
$$
\lim_{n\to \infty}\sum_{i=0}^{n-1}\phi(f_{\omega}^{i}(x))= \int \phi\, dm
$$
for every $\omega\in \Sigma^{x}$ and every continuous function $\phi\colon \Sigma\to \mathbb{R}$.}

\emph{
Let us observe that if we apply Theorem \ref{Breimanmorestrong} to the setting above then we get a slightly stronger statement 
than the one obtained applying the Breiman ergodic theorem. Indeed,
 by Theorem \ref{Breimanmorestrong} there is a set $\Sigma'$
of $\mathbb{P}$-full measure such that for every $\omega\in \Sigma'$  it holds
$$
\lim_{n\to \infty}\frac{1}{n}\sum_{i=0}^{n-1}\phi(f_{\omega}^{n}(x))=\lim_{n\to \infty}\int \phi\circ \Pi_{2}\,\, d\left(\frac{1}{n}\sum_{i=0}^{n-1}\delta_{F^{i}(\omega,x)}\right)
=\int \phi\circ \Pi_{2} \, d\mu=\int \phi \, dm
$$
for every $x\in X$ and every continuous function $\phi\colon X\to \mathbb{R}$.}

\end{remark}

We now stablish the main result of this section

\begin{mtheorem}\label{contractible}
Let $\Omega$ be a compact metric space and let $\varphi$ be a random product of maps on a compact metric space
 $X$ over an invertible and ergodic measure-preserving dynamical system $(\Omega,\mathscr{F},\mathbb{P},\theta)$. Suppose that the skew product $F_{\varphi}$ induced by $\varphi$ is continuous and there are an invariant graph
 and a unique $\varphi$-invariant measure. Then $\varphi$ is strongly synchronizing on average.
\end{mtheorem}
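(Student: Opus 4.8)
The plan is to combine the unique ergodicity (Theorem \ref{Breimanmorestrong}) with the existence of the invariant graph, reducing the synchronization statement to a statement about empirical measures converging to the unique $\varphi$-invariant measure $\mu$. The key observation is that because $\mu$ is $\varphi$-invariant and there is an invariant map $\Phi$, the measure $\mu$ must be the graph measure $\mu = \int_\Omega \delta_{(\omega,\Phi(\omega))}\, d\mathbb{P}(\omega)$. Indeed, one first shows that $\Phi_*\mathbb{P}$ lifts to a $\varphi$-invariant measure supported on $\mathrm{graph}\,\Phi$: the map $\omega\mapsto(\omega,\Phi(\omega))$ pushes $\mathbb{P}$ to a measure which is $F_\varphi$-invariant precisely because $f_\omega(\Phi(\omega))=\Phi(\theta(\omega))$ a.e., and it clearly projects to $\mathbb{P}$ on the first factor. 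By uniqueness this must be $\mu$.

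**Next I would** apply Theorem \ref{Breimanmorestrong}: for $\mathbb{P}$-almost every $\omega$ and every $x\in X$,
$$
\frac{1}{n}\sum_{i=0}^{n-1}\delta_{F_\varphi^i(\omega,x)}\longrightarrow \mu
$$
in the weak$*$ topology. The function $g(\omega,x)\eqdef d(x,\Phi(\omega))$ is, however, typically not continuous on $\Omega\times X$ (since $\Phi$ is only measurable), so one cannot directly integrate $g$ against the empirical measures. The remedy is a standard approximation argument: approximate $\Phi$ from below in $L^1(\mathbb{P})$-type sense by sufficiently regular maps, or more robustly, use Lusin's theorem to find, for each $\varepsilon>0$, a compact set $K_\varepsilon\subset\Omega$ with $\mathbb{P}(K_\varepsilon)>1-\varepsilon$ on which $\Phi$ is continuous; then $g$ restricted to $K_\varepsilon\times X$ is continuous and extends to a continuous function $g_\varepsilon$ on $\Omega\times X$ with $0\le g_\varepsilon\le \mathrm{diam}(X)$ and $g_\varepsilon = g$ on $K_\varepsilon\times X$. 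Since $\mu$ is the graph measure, $\int g\, d\mu = 0$, and $\int g_\varepsilon\, d\mu = \int_\Omega g_\varepsilon(\omega,\Phi(\omega))\, d\mathbb{P}(\omega) \le \mathrm{diam}(X)\cdot\mathbb{P}(\Omega\setminus K_\varepsilon) \le \varepsilon\,\mathrm{diam}(X)$. Applying weak$*$ convergence to $g_\varepsilon$ yields $\limsup_n \frac1n\sum_{i=0}^{n-1} g_\varepsilon(F_\varphi^i(\omega,x)) = \int g_\varepsilon\,d\mu \le \varepsilon\,\mathrm{diam}(X)$ for $\mathbb{P}$-a.e.\ $\omega$; combining with an ergodic-theorem control of how often the orbit of $(\omega,x)$ visits $(\Omega\setminus K_\varepsilon)\times X$ (here one uses Birkhoff for the base, $\frac1n\#\{i<n: \theta^i\omega\notin K_\varepsilon\}\to\mathbb{P}(\Omega\setminus K_\varepsilon)\le\varepsilon$, together with the a.e.\ bound $g\le\mathrm{diam}(X)$ on the bad set and $g=g_\varepsilon$ on the good set) gives $\limsup_n \frac1n\sum_{i=0}^{n-1} d(f_\omega^i(x),\Phi(\theta^i(\omega))) \le 2\varepsilon\,\mathrm{diam}(X)$. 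Letting $\varepsilon\to 0$ along a countable sequence and intersecting the full-measure sets, we obtain that for $\mathbb{P}$-a.e.\ $\omega$, $\frac1n\sum_{i=0}^{n-1} d(f_\omega^i(x),\Phi(\theta^i(\omega)))\to 0$ for every $x\in X$.

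**Finally,** strong synchronization on average follows immediately from the triangle inequality: for any $x,y\in X$,
$$
d(f_\omega^i(x),f_\omega^i(y)) \le d(f_\omega^i(x),\Phi(\theta^i(\omega))) + d(\Phi(\theta^i(\omega)),f_\omega^i(y)),
$$
so averaging over $i=0,\dots,n-1$ and using the previous paragraph for both $x$ and $y$ gives $\frac1n\sum_{i=0}^{n-1} d(f_\omega^i(x),f_\omega^i(y))\to 0$ for $\mathbb{P}$-a.e.\ $\omega$ and \emph{every} $x,y\in X$, which is exactly the strong synchronization on average.

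**The main obstacle** I anticipate is the lack of continuity of $\Phi$: the empirical-measure convergence from Theorem \ref{Breimanmorestrong} is only against continuous test functions, so the bulk of the work is the Lusin-approximation / Birkhoff-bookkeeping argument in the second paragraph needed to pass from continuous approximations $g_\varepsilon$ to the genuinely relevant discontinuous function $g(\omega,x)=d(x,\Phi(\omega))$, while carefully keeping all the exceptional null sets countable in number so they can be intersected. A secondary point requiring care is verifying that the graph measure is genuinely $F_\varphi$-invariant and hence, by the uniqueness hypothesis, equals $\mu$ — this uses the invariance equation \eqref{f1omega} and measurability of $\Phi$, and one should check the graph map $\omega\mapsto(\omega,\Phi(\omega))$ is measurable so that the pushforward makes sense.
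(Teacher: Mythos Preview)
Your proposal is correct and follows essentially the same route as the paper: identify $\mu$ as the graph measure over $\Phi$, invoke Theorem~\ref{Breimanmorestrong} for empirical convergence, handle the discontinuity of $g(\omega,x)=d(x,\Phi(\omega))$ via Lusin's theorem together with a Birkhoff estimate on the frequency of visits to the bad base set, and finish with the triangle inequality. The only cosmetic difference is that the paper works with the upper semi-continuous function $g\mathds{1}_{F_k\times X}$ and applies Alexandrov's (Portmanteau) theorem, whereas you extend $g|_{K_\varepsilon\times X}$ to a continuous $g_\varepsilon$ via Tietze; both devices serve the identical purpose and the remaining bookkeeping is the same.
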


The following example shows that the uniqueness of the $\varphi$-invariant measure alone does not 
imply synchronization on average.
\begin{example}\label{exemple}\emph{
Let $\Omega=\{1,2\}$ endowed with the discrete $\sigma$-algebra $\mathscr{D}$ and the probability measure 
$\mathbb{P}=\frac{1}{2}\delta_{1}+\frac{1}{2}\delta_{2}$.  Let $\theta\colon \Omega\to \Omega$ be the 
measure-preserving transformation of $(\Omega,\mathscr{D},\mathbb{P})$ defined by 
$
\theta(1)=2 \quad \mbox{and}\quad \theta(2)=1.
$}

\emph{
Let $X=\{1,2\}$ endowed with the discrete topology and let $f\colon \Omega \times X\to X$ be defined by 
$f(1,1)=2$, $f(2,2)=2$, $f(1,2)=1$ and $f(2,1)=1$. Then, the random product $\varphi$ on $X$ over
$(\Omega,\mathscr{D},\mathbb{P}, \theta)$, whose generator is $f$, has a unique $\varphi$-invariant measure 
and it is not synchronizing on average.  }
\end{example}

\subsection{Letac's principle for a certain class of random products}
In this section we stablish a version of Letac's principle for non i.i.d. random products
on a compact metric space. To recall the Letac's principle \cite{Letac},
 let $\varphi$ be an i.i.d. continuous random product of maps $f_{\sigma^{i}(\omega)}$ on a metric space $X$
 over the one-sided Bernoulli shift $(\Sigma^{+},\mathscr{F}^{+},\mathbb{P}^{+},\sigma)$ (recall definition 
 in Section \ref{ss.vanishing}). Suppose that 
 for $\mathbb{P}^{+}$-almost every $\omega$ the limit of reversed order iterates
\begin{equation}\label{reverseorder}
\Phi^{+}(\omega)\eqdef\lim_{n\to \infty}f_{\omega}\circ \dots \circ f_{\sigma^{n}(\omega)}(p)
\end{equation}
exists and does not depend on $p\in X$. With this assumption it is proved in \cite{Letac} that
the family of Markov chains $X_{n}^{x}(\omega)=f_{\omega}^{n}(\omega)$, $x\in X$ (with common probability transition) has a unique stationary measure.

The next result can be seen as a version of Letac's principle for non i.i.d. random products. In fact, we assume a similar convergence of the
reversed order iterates as in \eqref{reverseorder}, but in a uniform way, and we obtain synchronization 
on average in a uniform way, too. In particular,  Theorem \ref{Dirac} gives us the uniqueness of the invariant measure.

\begin{mtheorem}\label{Strong} Let $\varphi$ be a random product of maps $f_{\theta^{i}(\omega)}$ on a compact metric space
 $X$ over an invertible measure-preserving dynamical system $(\Omega,\mathscr{F},\mathbb{P},\theta)$. 
  Then the following facts are equivalent
 \begin{itemize}
 \item[(1)]
  $
\displaystyle \lim_{n \rightarrow \infty} \mathrm{diam}\,(f_{\theta^{-1}(\omega)}\circ \cdots\circ f_{\theta^{-n}(\omega)}(X))=0
\quad \mbox{for}\,\, \mathbb{P}\mbox{-almost every}\,\, \omega.
 $
\item[(2)]
 $
\displaystyle \lim_{n\to\infty}\frac{1}{n}\sum_{i=1}^{n} \mathrm{diam}\,(f_{\omega}^{i}(X))= 0\quad \mbox{for}\,\, \mathbb{P}\mbox{-almost every}\,\, \omega.
 $

 \end{itemize}

\end{mtheorem}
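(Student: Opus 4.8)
The plan is to deduce both implications from two elementary observations linking the two families of diameters. Put $M \eqdef \mathrm{diam}(X) < \infty$ and, for $n\ge 1$,
\[
d_n(\omega) \eqdef \mathrm{diam}\bigl(f_{\omega}^{n}(X)\bigr), \qquad D_n(\omega) \eqdef \mathrm{diam}\bigl(f_{\theta^{-1}(\omega)}\circ\cdots\circ f_{\theta^{-n}(\omega)}(X)\bigr),
\]
so that $(1)$ reads $\lim_n D_n=0$ $\mathbb P$-a.e.\ and $(2)$ reads $\lim_n \tfrac1n\sum_{i=1}^{n} d_i=0$ $\mathbb P$-a.e.\ (throughout one passes to the $\mathbb P$-completion of $\mathscr F$, with respect to which these diameters are measurable, since $(\omega,x,y)\mapsto d(f_\omega^n(x),f_\omega^n(y))$ is jointly measurable and one invokes the measurable projection theorem; this affects no a.e.\ statement). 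The first observation is that $n\mapsto D_n(\omega)$ is non-increasing, because $f_{\theta^{-(n+1)}(\omega)}(X)\subseteq X$ and taking images preserves inclusions; hence $D_n(\omega)\downarrow D_\infty(\omega)\in[0,M]$ for every $\omega$, and $(1)$ says precisely that $D_\infty=0$ a.e. The second observation is the identity $D_n(\theta^{n}(\omega))=d_n(\omega)$ for every $n\ge 1$, immediate from $f_\omega^n=f_{\theta^{n-1}(\omega)}\circ\cdots\circ f_{\omega}$. The one subtlety is that the shift appearing here is $\theta^{n}$, which varies with $n$, so the identity cannot be fed directly into an ergodic theorem; the two implications circumvent this in different ways.

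For $(1)\Rightarrow(2)$ I would fix $k\ge 1$. Monotonicity in the index gives $d_i(\omega)=D_i(\theta^{i}(\omega))\le D_k(\theta^{i}(\omega))$ for all $i\ge k$, while $d_i\le M$ always, so
\[
\frac1n\sum_{i=1}^{n} d_i(\omega)\;\le\;\frac{(k-1)M}{n}+\frac1n\sum_{i=k}^{n} D_k\bigl(\theta^{i}(\omega)\bigr).
\]
Birkhoff's ergodic theorem for the bounded function $D_k$ and the measure-preserving map $\theta$ (no ergodicity is needed) shows that $\tfrac1n\sum_{i=0}^{n-1}D_k\circ\theta^{i}$ converges a.e.\ to a $\theta$-invariant integrable function $D_k^{*}$ with $\int D_k^{*}\,d\mathbb P=\int D_k\,d\mathbb P$; dropping the finitely many initial terms, the right-hand side above tends to $D_k^{*}(\omega)$, so $\limsup_n \tfrac1n\sum_{i=1}^{n} d_i(\omega)\le D_k^{*}(\omega)$ for a.e.\ $\omega$, for each $k$. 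Since $D_k\downarrow 0$ a.e., the functions $D_k^{*}$ are a.e.\ non-increasing in $k$, and $\int D_k^{*}\,d\mathbb P=\int D_k\,d\mathbb P\to 0$ by dominated convergence, so $D_k^{*}\downarrow 0$ a.e. On a single set of full measure all these inequalities hold simultaneously and $D_k^{*}\downarrow 0$, whence $\limsup_n \tfrac1n\sum_{i=1}^{n} d_i(\omega)\le 0$; as the averages are nonnegative this is $(2)$.

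For $(2)\Rightarrow(1)$, since $\theta$ — hence $\theta^{n}$ — is measure-preserving and $D_n\circ\theta^{n}=d_n$, we have $\int D_n\,d\mathbb P=\int d_n\,d\mathbb P$ for every $n$. As $D_n\downarrow D_\infty$ with $D_n\le M$, dominated convergence yields $\int d_n\,d\mathbb P=\int D_n\,d\mathbb P\to\int D_\infty\,d\mathbb P$, so the sequence $a_n\eqdef\int d_n\,d\mathbb P$ converges. On the other hand $0\le\tfrac1n\sum_{i=1}^{n} d_i\le M$ and this average tends to $0$ a.e.\ by $(2)$, so dominated convergence gives $\tfrac1n\sum_{i=1}^{n} a_i=\int\bigl(\tfrac1n\sum_{i=1}^{n} d_i\bigr)\,d\mathbb P\to 0$. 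Since the arithmetic means of a convergent sequence have the same limit as the sequence, $\int D_\infty\,d\mathbb P=\lim_n a_n=0$; as $D_\infty\ge 0$, this forces $D_\infty=0$ a.e., which is $(1)$.

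I expect the only real obstacle to be conceptual: noticing that the natural bridge $D_n\circ\theta^{n}=d_n$ is useless for an ergodic theorem as written, and then supplying the two replacements — monotonicity of $D_n$ in $n$ (which lets one dominate $d_i$, for $i\ge k$, by the honest Birkhoff sum of the fixed function $D_k$ and afterwards send $k\to\infty$) and the invariance $\int D_n\,d\mathbb P=\int d_n\,d\mathbb P$ combined with the elementary fact that a convergent sequence and its arithmetic means share a limit. Everything else should reduce to routine bounded/dominated convergence.
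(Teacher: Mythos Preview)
Your proof is correct. The implication $(2)\Rightarrow(1)$ is essentially identical to the paper's argument: both rely on the monotonicity of $D_n$, the identity $\int D_n\,d\mathbb{P}=\int d_n\,d\mathbb{P}$ coming from $\theta$-invariance, and a Ces\`aro/dominated convergence step to force $\int D_\infty\,d\mathbb{P}=0$.

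The real difference lies in $(1)\Rightarrow(2)$. The paper observes that $d_{n+1}(\omega)\le d_n(\theta(\omega))$, deduces that $u_n\eqdef\sum_{j=1}^{n}d_j$ is a subadditive cocycle, and then invokes \emph{Kingman's subadditive ergodic theorem} to obtain an a.e.\ limit $g$ for $u_n/n$ together with the integral identity $\int g\,d\mathbb{P}=\lim_n\frac1n\int u_n\,d\mathbb{P}$; the right-hand side is then shown to vanish via $\int d_j\,d\mathbb{P}=\int D_j\,d\mathbb{P}\to 0$. You instead avoid Kingman entirely: for each fixed $k$ you dominate $d_i(\omega)=D_i(\theta^{i}(\omega))\le D_k(\theta^{i}(\omega))$ for $i\ge k$, apply \emph{Birkhoff's ergodic theorem} to the fixed bounded function $D_k$ to get $\limsup_n\frac1n\sum d_i\le D_k^{*}$, and then let $k\to\infty$ using $\int D_k^{*}\,d\mathbb{P}=\int D_k\,d\mathbb{P}\to 0$. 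Your route is slightly more elementary in that it relies only on Birkhoff (and does not require ergodicity), at the cost of a two-parameter limiting argument; the paper's route is more structural, packaging the monotonicity as subadditivity and outsourcing the pointwise convergence to Kingman in one stroke. Both approaches ultimately hinge on the same bridge $\int d_n\,d\mathbb{P}=\int D_n\,d\mathbb{P}$ together with $D_n\downarrow 0$.
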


Theorem \ref{Strong} can be applied, for instance, to obtain 
the synchronization on average of a certain class of Markovian random products. In \cite[Theorem $4.1$]{DiazMatias} is presented 
a large a class of Markovian random products for which item (1) of the Theorem \ref{Strong} is satisfied.

\begin{remark}\emph{
We observe that, in general, it is not possible to prove that $(1)$ implies $\mathrm{diam}\,(f_{\omega}^{n}(X))\to 0$, 
see \cite[Section $6$]{Italo}.} 

\end{remark}

\section{Existence of measurable invariant graphs}\label{s.existence}
In this section we prove Theorem \ref{Dirac} and Corollary \ref{vanishing}.
Let $(X,d)$ be a compact metric space. We first introduce a well-known tool 
to detect Dirac measures on $X$.

\begin{definition}
For any Borel probability measure $m$ on $X$ let
$$
D(m)\eqdef \iint d(x,y)\,dm(x)\,dm(y).
$$
\end{definition}

\begin{lemma}\label{DDirac}
If $D(m)=0$ then there is $p\in X$ such that $m=\delta_{p}$. 
\end{lemma}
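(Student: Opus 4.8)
The plan is to show that if $D(m)=0$ then $m$ is supported on a single point. First I would argue that the support of $m$ must be a singleton. Recall that $\mathrm{supp}(m)$ is the set of points $p\in X$ such that every open neighbourhood of $p$ has positive $m$-measure. Suppose, for contradiction, that $\mathrm{supp}(m)$ contains two distinct points $p\neq q$, and set $r\eqdef d(p,q)>0$. Choose open balls $U=B(p,r/3)$ and $V=B(q,r/3)$; by definition of support, $a\eqdef m(U)>0$ and $b\eqdef m(V)>0$, and $U\cap V=\emptyset$ by the triangle inequality. For any $x\in U$ and $y\in V$ we have $d(x,y)\geq d(p,q)-d(p,x)-d(q,y)>r/3$.

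Now I would estimate the double integral from below by restricting to the product set $U\times V$ (and, by symmetry, $V\times U$), using nonnegativity of $d$ on the rest of $X\times X$:
\begin{equation*}
D(m)=\iint_{X\times X} d(x,y)\,dm(x)\,dm(y)\geq \iint_{U\times V} d(x,y)\,dm(x)\,dm(y)\geq \frac{r}{3}\,m(U)\,m(V)=\frac{r}{3}\,ab>0,
\end{equation*}
which contradicts $D(m)=0$. Hence $\mathrm{supp}(m)$ is a single point, say $\mathrm{supp}(m)=\{p\}$, and therefore $m=\delta_p$. Since $X$ is a compact metric space, $\mathrm{supp}(m)$ is nonempty and well-behaved, so this step is clean; the only mild subtlety — measurability of $U\times V$ and the validity of the restricted integral bound — is immediate since $d$ is continuous hence Borel on $X\times X$ and $U,V$ are open.

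The argument has essentially no obstacle: the one point requiring a little care is making sure the two chosen balls are genuinely disjoint and that $d$ is bounded below by a positive constant on $U\times V$, which the triangle inequality handles. An alternative, slightly slicker route avoids the support entirely: if $m$ were not a Dirac mass, then $(m\times m)$ would assign positive mass to the open set $\{(x,y): d(x,y)>0\}$ (since its complement, the diagonal, would then have $m\times m$-measure $<1$), and integrating the strictly positive continuous function $d$ over a positive-measure set forces $D(m)>0$. I would likely present the support-based version since it also pins down which point $p$ works.
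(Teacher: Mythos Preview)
Your proof is correct and follows essentially the same approach as the paper: both argue by contradiction, picking two distinct points in $\mathrm{supp}(m)$ and disjoint open neighborhoods of positive measure. The only cosmetic difference is that the paper first deduces $\mathrm{supp}(m\times m)\subset \mathrm{Diag}(X)$ and then obtains the contradiction from $m\times m(V_1\times V_2)>0$, whereas you bound $D(m)$ below directly by $\frac{r}{3}\,m(U)\,m(V)$; the underlying idea is identical.
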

\begin{proof}
If $D(m)=0$, then by definition 
we have 
$$
d(x,y)=0\quad \mbox{for}\quad m\times m \mbox{\,-almost every} \quad (x,y)\in X\times X,
$$
where $m\times m$ is the product measure on $X \times X$. This implies that 
the support of the measure $m\times m$ is a subset of the set $\mathrm{Diag}(X)=\{(x,x)\colon x\in X\}$.
Suppose that $\mathrm{supp}\, m$ is not a singleton. Then there are $a$ and $b$ in $\mathrm{supp}\, m$
with $a\neq b$. Let $V_{1}$ and $V_{2}$ be open disjoints neighbourhoods of $a$ and $b$, respectively. Since $a$ and $b$ belong to the support of $m$, we get $m\times m (V_{1}\times V_{2})>0$.
Note also that, by the disjointness, we have $(V_{1}\times V_{2})\cap \mathrm{Diag}(X)=\emptyset$, which 
contradicts the fact that the support of $m\times m$ is a subset of $\mathrm{Diag}(X)$. The proof 
of the lemma is now complete.
\end{proof}

\subsection{Proof of Theorem \ref{Dirac}}
 
By hypothesis $I_{\mathbb{P}}(\varphi)\neq \emptyset$.  We start by proving that $I_{\mathbb{P}}(\varphi)$ is a singleton.
Let $\mu$ be any probability measure in $I_{\mathbb{P}}(\varphi)$ and consider its disintegration $\omega\mapsto\mu_{\omega}$
 with respect to $\mathbb{P}$.
The definition of $D$ and the synchronization property imply that for $\mathbb{P}$-almost every $\omega$ in $\Omega$, 
$$
\lim_{n\to \infty}\frac{1}{n}\sum_{j=0}^{n-1}D(f_{\omega*}^{j}\mu_{\omega})=\lim_{n\to \infty}
\iint \frac{1}{n}\sum_{j=0}^{n-1}
d(f_{\omega}^{n}(x),f_{\omega}^{n}(y))\,d\mu_{\omega}(x)\,d\mu_{\omega}(y)=0.
$$
Hence, using the dominated convergence theorem, we have 
\begin{equation}\label{synconaverage}
\lim_{n\to \infty}\int \frac{1}{n}\sum_{j=0}^{n-1}D(f_{\omega*}^{j}\mu_{\omega})\,d\mathbb{P}=0.
\end{equation}
Recall that a measure $\mu$ on $\mathcal{M}_{\mathbb{P}}(\Omega\times X)$ is $\varphi$-invariant if and only if $\mu_{\theta(\omega)}=f_{\omega*}\mu_{\omega}$  for $\mathbb{P}$-almost every $\omega$ in $\Omega$, see \cite[Theorem 1.4.5]{Arnold}. In particular, for every 
$n\geq 1$ the equality
$
\mu_{\theta^{n}(\omega)}=f_{\omega*}^{n}\mu_{\omega}
$ 
holds
for $\mathbb{P}$-almost every $\omega$, and as a consequence,
\begin{equation}\label{17072017}
\begin{split}
\int \frac{1}{n}\sum_{j=0}^{n-1}D(f_{\omega*}^{j}\mu_{\omega})\,d\mathbb{P}
=\int \frac{1}{n}\sum_{j=0}^{n-1}D(\mu_{\theta^{j}(\omega)})\,d\mathbb{P}
=&\frac{1}{n}\sum_{j=0}^{n-1}\int D(\mu_{\theta^{j}(\omega)})\,d\mathbb{P}.
\end{split}
\end{equation}
Note that, by the invariance of $\theta$, we have   
$$
\int D(\mu_{\theta^{j}(\omega)})\,d\mathbb{P}=\int D(\mu_{\omega})\,d\mathbb{P}
$$
for every $j\geq 1$. Thus it follows from \eqref{synconaverage} and \eqref{17072017} that $D(\mu_{\omega})=0$ for $\mathbb{P}$-almost every $\omega$ in $\Omega$.
 Applying Lemma \ref{DDirac} we get that 
$\mu_{\omega}$ is a delta Dirac measure for $\mathbb{P}$-almost every $\omega$, that is, there is a set $\Lambda\subset\Omega$ with $\mathbb{P}$-full measure and a map $\Phi\colon \Lambda\to X$ such that 
$$
\mu_{\omega}=\delta_{\Phi(\omega)}
$$
for every $\omega\in \Lambda$.

This shows that the disintegration of any $\varphi$-invariant measure is atomic. This fact implies 
that $I_{\mathbb{P}}(\varphi)$ is a singleton. Indeed, let $\mu$ and $\nu$ be two $\varphi$-invariant measures.
Then there are maps $\Phi, \Psi\colon \Omega\to X$  such that $\omega\mapsto \delta_{\Phi(\omega)}$ and 
$\omega\mapsto \delta_{\Psi(\omega)}$ are the disintegration of $\mu$ and $\nu$, respectively.
The probability measure $\frac{\mu+\nu}{2}$ is also a $\varphi$-invariant measure and then there exists a map 
$\Gamma\colon \Omega\to X$ such that the disintegration of $\frac{\mu+\nu}{2}$ is given by $\omega\mapsto \delta_{\Gamma(\omega)}$.
On the other hand,  $\omega\mapsto \frac{\delta_{\Phi(\omega)}+\delta_{\Psi(\omega)}}{2}$ is also a 
disintegration of $\frac{\mu+\nu}{2}$. By the uniqueness (a.e.) of the disintegration of a probability measure we have 
$$
 \frac{\delta_{\Phi(\omega)}+\delta_{\Psi(\omega)}}{2}=\delta_{\Gamma(\omega)}
$$
for $\mathbb{P}$-almost every $\omega$ in $\Omega$, which implies that $\Gamma(\omega)=\Phi(\omega)=\Psi(\omega)$
for $\mathbb{P}$-almost every $\omega$ in $\Omega$. Therefore, $I_{\mathbb{P}}(\varphi)$ is a singleton.

Let $\mu$ be the unique $\varphi$-invariant measure and let $\Phi\colon \Omega\to X$ be a map such that 
$\omega\mapsto \delta_{\Phi(\omega)}$ is the disintegration of $\mu$.
We claim that $\Phi$ is an invariant map. We start by proving that $\Phi$ is measurable: for every Borel 
measurable set $R\subset X$ we have $\gamma_{R}^{-1}(\{1\})=\Phi^{-1}(R)$, where $\gamma_{R}$ is the map
 $\omega\mapsto \mu_{\omega}(R)$. By the definition of disintegrations 
 we have that $\gamma_{R}$ is measurable, which implies that $\Phi$ is also measurable.
 
 To prove  that $\Phi$ is an invariant map recall that the $\varphi$-invariance of $\mu$ implies $\mu_{\theta(\omega)}=f_{\omega*}\mu_{\omega}$ for $\mathbb{P}$-almost every $\omega$ in $\Omega$. Hence
 we have 
\begin{equation}\label{a.t}
\delta_{\Phi(\theta(\omega))}=\mu_{\theta(\omega)}=
f_{\omega*}\mu_{\omega}=f_{\omega*}\delta_{\Phi(\omega)}=\delta_{f_{\omega}(\Phi(\omega))}
\end{equation}
for $\mathbb{P}$-almost every $\omega$ in $\Omega$, which implies that $f_{\omega}(\Phi(\omega))=\Phi(\theta(\omega))$  for $\mathbb{P}$-almost every $\omega$ in $\Omega$. The uniqueness (a.e.) of the invariant map is proved observing that 
every invariant map determines a $\varphi$-invariant measure, as in \eqref{a.t}.

We now prove that the graph of $\Phi$ is a global attractor on average. First note that
 the invariance of $\Phi$ implies that for every $n\geq 0$
$$
\Phi(\theta^{n}(\omega))=f_{\omega}^{n}(\Phi(\omega)),
$$
for $\mathbb{P}$-almost every $\omega$ in $\Omega$. Thus, it follows from 
the synchronization property that for $\mathbb{P}$-almost every $\omega$ in $\Omega$ we have
\begin{equation}\label{attractoraverag2}
	\lim_{n\to \infty}\frac{1}{n}\sum_{i=0}^{n-1}d(f_{\omega}^{i}(x),\Phi(\theta^{i}(\omega)))=0
\end{equation} 
for every $x\in X$.

Finally, we prove that $\Phi$ is $\mathscr{F}^{-}$-measurable. From \eqref{attractoraverag2} and the
dominated convergence theorem we have 
\begin{equation}\label{intatractor}
	\lim_{n\to \infty}\int\frac{1}{n}\sum_{i=0}^{n-1} d(f_{\omega}^{i}(x),\Phi(\theta^{i}(\omega))\,d\mathbb{P}=0,
\end{equation}
for every $x\in X$. Take any $x\in X$, by the invariance of $\theta$ it follows from \eqref{intatractor} that
\begin{equation}\label{codingmap}
	\lim_{n\to \infty}\frac{1}{n}\sum_{i=0}^{n-1} \int  d( f^{i}_{\theta^{-i}(\omega)}(x),\Phi(\omega))\,d\mathbb{P}=0.
\end{equation}
Since $d( f^{i}_{\theta^{-i}(\omega)}(x),\Phi(\omega))\geq 0$ for every $i \geq 0$, we obtain from \eqref{codingmap}
that there exists a subsequence $(n_{k})_{k}$ such that 
$$
\lim_{k\to \infty}\int  d( f^{n_{k}}_{\theta^{-n_{k}}(\omega)}(x),\Phi(\omega))\,d\mathbb{P}= 0.
$$
Thus, passing to another subsequence if necessary we have

$$
\Phi(\omega)=\lim_{k\to \infty}f^{n_{k}}_{\theta^{-n_{k}}(\omega)}(x),
$$
for $\mathbb{P}$-almost every $\omega$ in $\Omega$,
which implies that $\Phi$ is $\mathscr{F}^{-}$-measurable.
The proof of the theorem is now complete.
\hfill \qed

\begin{remark}\emph{
Note that in the above proof we needed to prove first that $\Phi$ is measurable because to prove that it is 
 $\mathscr{F}^{-}$-measurable we take an integral involving this map, see \eqref{intatractor}.}

\end{remark}

\subsection{Proof of Corollary \ref{vanishing}}

By Theorem \ref{Dirac} there exists a $\mathscr{F}^{-}$-measurable map $\Phi\colon\Sigma\to X$ whose graph is a global
attractor on average.

Suppose that there is a measurable map $\Phi^{+}\colon \Sigma^{+}\to X$ whose graph 
is an attractor on average. Let $\Pi^{+}\colon \Sigma\to \Sigma^{+}$ be the natural projection defined 
in \eqref{naturalproj}.

 \begin{claim}
 $\Phi(\omega)= \Phi^{+}\circ\Pi^{+}(\omega)$ for $\mathbb{P}$-almost every $\omega$.
 \end{claim}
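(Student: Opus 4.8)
The plan is to show that, in the Cesàro sense, both $\Phi$ and $\Phi^{+}\circ\Pi^{+}$ follow the random forward orbit of the point $\Phi^{+}(\Pi^{+}(\omega))$, and then to compare them. I would begin by recording two elementary identities that hold because the fiber maps depend only on the zeroth coordinate: writing $f^{+}$ for the map with $f(\omega,x)=f^{+}(\Pi^{+}(\omega),x)$, one has $\Pi^{+}\circ\sigma=\sigma\circ\Pi^{+}$ and hence $f_{\omega}^{i}=(f^{+})^{i}_{\Pi^{+}(\omega)}$ for all $i\geq 0$; in particular the forward orbit $(f_{\omega}^{i}(x))_{i}$ depends on $\omega$ only through $\Pi^{+}(\omega)$.

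Next I would invoke Theorem \ref{Dirac}, which tells us that $\mathrm{graph}\,\Phi$ is a global attractor on average for $F_{\varphi}$: there is a set of full $\mathbb{P}$-measure of $\omega$ on which $\frac{1}{n}\sum_{i=0}^{n-1}d(f_{\omega}^{i}(x),\Phi(\sigma^{i}(\omega)))\to 0$ \emph{for every} $x\in X$. Since this exceptional set does not depend on $x$, I may substitute the particular point $x=\Phi^{+}(\Pi^{+}(\omega))$ and obtain
\[
\lim_{n\to\infty}\frac{1}{n}\sum_{i=0}^{n-1}d\bigl(f_{\omega}^{i}(\Phi^{+}(\Pi^{+}(\omega))),\,\Phi(\sigma^{i}(\omega))\bigr)=0\qquad\text{for }\mathbb{P}\text{-a.e. }\omega.
\]
On the other hand, because $\mathrm{graph}\,\Phi^{+}$ is an attractor on average for $F_{\varphi^{+}}$, the section $(\mathsf{B}_{\Phi^{+}})_{\eta}$ is a neighbourhood of $\Phi^{+}(\eta)$, hence contains $\Phi^{+}(\eta)$, for $\mathbb{P}^{+}$-a.e. $\eta$; that is, $\frac{1}{n}\sum_{i=0}^{n-1}d((f^{+})^{i}_{\eta}(\Phi^{+}(\eta)),\Phi^{+}(\sigma^{i}(\eta)))\to 0$. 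Transporting this through $\Pi^{+}$ (using $(\Pi^{+})_{*}\mathbb{P}=\mathbb{P}^{+}$) and the identities from the first step gives
\[
\lim_{n\to\infty}\frac{1}{n}\sum_{i=0}^{n-1}d\bigl(f_{\omega}^{i}(\Phi^{+}(\Pi^{+}(\omega))),\,\Phi^{+}(\Pi^{+}(\sigma^{i}(\omega)))\bigr)=0\qquad\text{for }\mathbb{P}\text{-a.e. }\omega,
\]
and a termwise triangle inequality combining the two displays yields $\frac{1}{n}\sum_{i=0}^{n-1}d(\Phi(\sigma^{i}(\omega)),\Phi^{+}(\Pi^{+}(\sigma^{i}(\omega))))\to 0$ for $\mathbb{P}$-a.e. $\omega$.

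To finish, I would set $g(\omega)\eqdef d(\Phi(\omega),\Phi^{+}(\Pi^{+}(\omega)))$, a bounded, non-negative, measurable function on $\Sigma$ (measurability of $\Phi$ coming from Theorem \ref{Dirac} and of $\Phi^{+}$ from hypothesis). The previous line says $\frac{1}{n}\sum_{i=0}^{n-1}g(\sigma^{i}(\omega))\to 0$ for $\mathbb{P}$-a.e. $\omega$; integrating via dominated convergence and using the $\sigma$-invariance of $\mathbb{P}$ — exactly the device already used in the proof of Theorem \ref{Dirac} — gives $\int g\,d\mathbb{P}=0$, so $g=0$ $\mathbb{P}$-a.e., which is the Claim. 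The only genuinely delicate points are bookkeeping: intersecting the various full-measure sets so that all the Cesàro limits hold on a single set of $\omega$'s, and observing that since "attractor on average" only guarantees that the $\omega$-section of the basin is a \emph{neighbourhood} of $\Phi^{+}(\omega)$, one must feed in precisely the point $\Phi^{+}(\Pi^{+}(\omega))$ (which lies in that neighbourhood) rather than an arbitrary point of $X$. Beyond this I do not anticipate a real obstacle.
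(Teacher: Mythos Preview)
Your proposal is correct and follows essentially the same route as the paper: both arguments pick a point lying simultaneously in the basin of $\Phi$ and in the basin of $\Phi^{+}\circ\Pi^{+}$, apply the triangle inequality to get Ces\`aro convergence of $d(\Phi(\sigma^{i}(\omega)),\Phi^{+}\circ\Pi^{+}(\sigma^{i}(\omega)))$ to $0$, and then integrate using dominated convergence and $\sigma$-invariance. The only cosmetic difference is that you make the explicit choice $x(\omega)=\Phi^{+}(\Pi^{+}(\omega))$, whereas the paper merely asserts the existence of some $x(\omega)$ in the intersection of the two basin sections.
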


\begin{proof} 

By hypothesis, the graph of $\Phi^{+}$ is an attractor on average for $F_{\varphi^{+}}$. 
Noting that $\Pi^{+}_{*}\mathbb{P}=\mathbb{P}^{+}$, we get that the graph of 
$\Phi^{+}\circ\Pi^{+}$ is an attractor on average for $F_{\varphi}$. Observe that for every $x$ and $\omega$ we have
\begin{equation}\label{tarde}
d(\Phi(\theta^{i}(\omega)),\Phi^{+}\circ \Pi^{+}(\theta^{i}(\omega)))\leq d(f^{i}_{\omega}(x),\Phi(\theta^{i}(\omega)))+ d(f^{i}_{\omega}(x),\Phi^{+}\circ \Pi^{+}(\theta^{i}(\omega))),
\end{equation}
for every $i\geq 0$. Since the graph of $\Phi$ is a global attractor on average, for $\mathbb{P}$-almost 
every $\omega$ we can take a point $x(\omega)\in X$ such that 
$$
x(\omega)\in \rho(\Phi)_{\omega}\cap  \rho(\Phi^{+}\circ \Pi^{+})_{\omega}.
$$
Applying \eqref{tarde} to the point $x(\omega)$ we obtain that for $\mathbb{P}$-almost every $\omega$
$$
\lim_{n\to \infty}\frac{1}{n}\sum_{i=0}^{n-1}d(\Phi(\theta^{i}(\omega)),\Phi^{+}\circ \Pi^{+}(\theta^{i}(\omega))=0.
$$
Using the dominated convergence theorem and the invariance of $\theta$ we conclude that
\begin{equation}\label{semergodicidade}
\begin{split}
0&=\lim_{n\to \infty}\int \frac{1}{n}\sum_{i=0}^{n-1}d(\Phi(\theta^{i}(\omega)),\Phi^{+}\circ \Pi^{+}(\theta^{i}(\omega))\,d\mathbb{P}(\omega)\\
&=\lim_{n\to \infty} \frac{1}{n}\sum_{i=0}^{n-1}\int d(\Phi(\omega),\Phi^{+}\circ \Pi^{+}(\omega))\,d\mathbb{P}(\omega)=
\int d(\Phi(\omega),\Phi^{+}\circ \Pi^{+}(\omega))\,d\mathbb{P}(\omega).
\end{split}
\end{equation}
Since $d(\Phi(\omega),\Phi^{+}\circ \Pi^{+}(\omega))\geq 0$ for every $\omega$, we obtain from \eqref{semergodicidade} 
that $\Phi(\omega)=\Phi^{+}\circ \Pi^{+}(\omega)$ for $\mathbb{P}$-almost every $\omega$, ending the proof
 of the claim.

\end{proof}

In order to conclude the proof of Corollary \ref{vanishing}, we consider the \emph{future} of 
the random product $\varphi$ defined by 
$$
\mathscr{F}^{+}\eqdef\sigma(\omega\mapsto \varphi(n,\theta^{n}(\omega),x) \colon n\geq 0, x\in X).
$$
Note that $\Phi^{+}\circ \Pi^{+}$ 
is $\mathscr{F}^{+}$-measurable. On the other hand, $\Phi$ is $\mathscr{F}^{-}$-measurable.
Since $\mathscr{F}^{+}$ and $\mathscr{F}^{-}$ are independent $\sigma$-algebras we obtain that $\Phi$ is 
constant $\mathbb{P}$-almost everywhere. Indeed, given a measurable set $A\in X$,
by independence we have 
\[
\begin{split}
\mathbb{P}(\Phi^{-1}(A))&=\mathbb{P}(\Phi^{-1}(A)\cap(\Phi^{+}\circ \Pi^{+})^{-1}(A))\\
&=
\mathbb{P}(\Phi^{-1}(A))\mathbb{P}((\Phi^{+}\circ \Pi^{+})^{-1}(A))=\mathbb{P}(\Phi^{-1}(A))^{2},
\end{split}
\]
which implies that $\mathbb{P}(\Phi^{-1}(A))=0$ or $\mathbb{P}(\Phi^{-1}(A))=1$. Thus, the $\sigma$-algebra 
generated by $\Phi$ coincides (mod $0$) with the trivial $\sigma$-algebra $\{\emptyset,\Omega\}$,
which implies that $\Phi$ is $\mathbb{P}$-a.e. constant, say, $\Phi(\omega)=p$ for
$\mathbb{P}$-almost every $\omega$ for some $p\in X$. Since $\Phi$ is an invariant map, we obtain $f_{\omega}(p)=p$ for $\mathbb{P}$-almost 
every $\omega$, which contradicts the hypothesis. Therefore, there is no measurable map $\Phi^{+}$ whose 
graph is an attractor on average, ending the proof of the corollary.
 \hfill \qed

\section{Sufficient condition for synchronization on average} \label{s.sufficient}
In this section we prove Theorems \ref{Breimanmorestrong} and \ref{contractible}.

\subsection{Proof of Theorem \ref{Breimanmorestrong}}
Let $\Omega_{\theta}$ be a subset 
 of $\Omega$ such that for every $\omega\in \Omega_{\theta}$ we have 
 \begin{equation}\label{deltadeltadelta}
  \lim_{n\to \infty}\frac{1}{n}\sum_{i=0}^{n-1}\delta_{\theta^{i}(\omega)}= \mathbb{P}.
  \end{equation}
 Since $\theta$ is ergodic we deduce that $\mathbb{P}(\Omega_{\theta})=1$.
Let $F$ be the skew product associated with $\varphi$, recall \eqref{defiskewproduct}.
 We claim 
that for $(\omega,x)\in \Omega_{\theta}\times X$ it holds 
$$
\lim_{n\to \infty}\frac{1}{n}\sum_{i=0}^{n-1}\delta_{F^{i}(\omega,x)} = \mu,
$$
 where $\mu$ is the unique $\varphi$-invariant measure.
To see why this is so, let $\Pi_{1}\colon \Omega\times X\to\Omega $ be the natural projection on the first factor.
  Then for 
 every $(\omega,x)$ it holds
 $$
  \Pi_{1*}\left(
 \frac{1}{n}\sum_{i=0}^{n-1}\delta_{F^{i}(\omega,x)} \right)= \frac{1}{n}\sum_{i=0}^{n-1}\delta_{\theta^{i}(\omega)}.
 $$
 Hence, it follows from \eqref{deltadeltadelta} and the continuity of $\Pi_{1*}$ (in the weak$*$- topology) that 
for every $(\omega,x)\in \Omega_{\theta}\times X$ any accumulation point $\mu'$
 of $\frac{1}{n}\sum_{i=0}^{n-1}\delta_{F^{i}(\omega,x)}$ satisfies $\Pi_{1*}\mu'=\mathbb{P}$.  Since any
 accumulation point of $\frac{1}{n}\sum_{i=0}^{n-1}\delta_{F^{i}(\omega,x)}$
 is a $F$-invariant measure then it follows 
 from the uniqueness of $\varphi$-invariant measures that  
 $$
 \lim_{n\to \infty}\frac{1}{n}\sum_{i=0}^{n-1}\delta_{F^{i}(\omega,x)}= \mu, 
 $$
for every $(\omega,x)\in \Omega_{\theta}\times X$. 

\hfill \qed

 \subsection{Proof of Theorem \ref{contractible}}
Suppose that there exists a unique $\varphi$-invariant measure $\mu$ and assume the existence of a measurable 
invariant map $\Phi\colon \Omega\to X$.  Let $g\colon \Omega\times X\to \mathbb{R}$ be the map defined by
 $$
 g(\omega,x)\eqdef d(x,\Phi(\omega)).
 $$
 Since $\omega\mapsto d(x,\Phi(\omega))$ 
is measurable for every $x$ and $x\mapsto d(x,\Phi(\omega))$ is continuous for 
every $\omega$, Carath\' eodory’s theorem
implies that the function $g$ is $\mathscr{F}\otimes\mathscr{B}$-measurable, 
see \cite{Crauel}. The following preliminary result is the key step to the proof of Theorem \ref{contractible}.

\begin{lemma}\label{principall}
For $\mathbb{P}$-almost every $\omega$ in $\Omega$ we have 
$$
\lim_{n\to \infty} \int g \, d\mu_{n}^{\omega,x}=0
$$
for every $x\in X$,
where $\mu_{n}^{\omega,x}= \frac{1}{n}\sum_{i=0}^{n-1}\delta_{F^{i}(\omega,x)}$.
\end{lemma}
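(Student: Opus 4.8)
The plan is to reduce the statement to the weak$*$ convergence of empirical measures against a \emph{continuous} test function, by passing to the fibered product of two copies of $\varphi$. The obstruction to applying Theorem \ref{Breimanmorestrong} directly is precisely that $g(\omega,x)=d(x,\Phi(\omega))$ is only measurable — $\Phi$ is not assumed continuous — whereas Theorem \ref{Breimanmorestrong} controls $\frac1n\sum_{i=0}^{n-1}\delta_{F^{i}(\omega,x)}$ only when integrated against continuous functions. Doubling the fiber turns the relevant integrand into $(x,y)\mapsto d(x,y)$, which \emph{is} continuous; this is the key idea, and identifying the resulting limit measure is the conceptual heart of the argument.

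First I would identify the invariant measure. Since $\Phi$ is an invariant map, the measure $\mu$ whose disintegration is $\omega\mapsto\delta_{\Phi(\omega)}$ is $\varphi$-invariant, by the computation \eqref{a.t} read in reverse; by the assumed uniqueness it is \emph{the} $\varphi$-invariant measure, and in particular $\int g\,d\mu=\int d(\Phi(\omega),\Phi(\omega))\,d\mathbb{P}(\omega)=0$. I would also fix, using that $\theta$ is invertible and measure preserving, a $\theta$-invariant full-measure set $\Omega_\Phi$ on which the invariance relation iterates: $f^{i}_\omega(\Phi(\omega))=\Phi(\theta^{i}(\omega))$ for every $i\ge 0$.

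Next I would introduce the continuous skew product $\widehat F\colon\Omega\times X\times X\to\Omega\times X\times X$, $\widehat F(\omega,x,y)\eqdef(\theta(\omega),f_\omega(x),f_\omega(y))$ (continuous because $F_\varphi$ is), and prove: for $\mathbb{P}$-almost every $\omega$ and \emph{all} $x,y\in X$ the measures $\widehat\nu_n\eqdef\frac1n\sum_{i=0}^{n-1}\delta_{\widehat F^{i}(\omega,x,y)}$ converge weakly to $\widehat\mu\eqdef\int\delta_{(\omega,\Phi(\omega),\Phi(\omega))}\,d\mathbb{P}(\omega)$. To see this, take $\omega$ in the full-measure set produced by Theorem \ref{Breimanmorestrong}. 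Since the space of Borel probability measures on the compact metric space $\Omega\times X\times X$ is compact and metrizable, it suffices to show that every subsequential weak$*$ limit $\nu$ of $(\widehat\nu_n)_n$ equals $\widehat\mu$. Pushing $\widehat\nu_n$ forward under the projections $(\omega,x,y)\mapsto(\omega,x)$ and $(\omega,x,y)\mapsto(\omega,y)$ produces $\frac1n\sum_{i=0}^{n-1}\delta_{F^{i}(\omega,x)}$ and $\frac1n\sum_{i=0}^{n-1}\delta_{F^{i}(\omega,y)}$, which both converge to $\mu$ by Theorem \ref{Breimanmorestrong}; hence both marginals of $\nu$ over $\Omega\times X$ equal $\mu$. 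Disintegrating $\nu$ over $\mathbb{P}$ (possible since $(\Pi_{1})_*\nu=\mathbb{P}$ and $\Omega\times X\times X$ is a compact metric space), the fiber measure $\nu_\omega$ on $X\times X$ then has both marginals equal to $\delta_{\Phi(\omega)}$ for $\mathbb{P}$-a.e.\ $\omega$; but a probability measure on $X\times X$ whose two marginals are a single Dirac mass $\delta_p$ is necessarily $\delta_{(p,p)}$. Therefore $\nu_\omega=\delta_{(\Phi(\omega),\Phi(\omega))}$ a.e., i.e.\ $\nu=\widehat\mu$.

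Finally, since $(\omega,x,y)\mapsto d(x,y)$ is continuous and bounded on $\Omega\times X\times X$, for $\omega$ in the full-measure intersection of $\Omega_\Phi$ with the set above and for any $x\in X$, applying the previous step with $y=\Phi(\omega)$ gives
$$
\int g\,d\mu_n^{\omega,x}=\frac1n\sum_{i=0}^{n-1}d\big(f^{i}_\omega(x),\Phi(\theta^{i}(\omega))\big)=\frac1n\sum_{i=0}^{n-1}d\big(f^{i}_\omega(x),f^{i}_\omega(\Phi(\omega))\big)=\int d(x',y')\,d\widehat\nu_n\;\longrightarrow\;\int d(x',y')\,d\widehat\mu=0,
$$
which is the assertion (with the full-measure set of $\omega$ independent of $x$, as required). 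The main obstacle, as noted, is the discontinuity of $g$; once it is circumvented by the doubling device, the only points needing care are the existence of the Rokhlin disintegration of $\nu$ over $\mathbb{P}$ and the elementary fact that a coupling of $\delta_p$ with itself is $\delta_{(p,p)}$ — the rest is the same empirical-measure bookkeeping already used in the proof of Theorem \ref{Breimanmorestrong}.
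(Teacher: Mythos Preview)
Your argument is correct and takes a genuinely different route from the paper's.

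The paper handles the discontinuity of $g$ by Lusin's theorem: it finds compact sets $F_k\subset\Omega$ with $\mathbb{P}(F_k^c)<1/k$ on which $\Phi$ is continuous, observes that $g\mathds{1}_{F_k\times X}$ is upper semi-continuous, applies the Portmanteau/Alexandrov theorem together with Theorem~\ref{Breimanmorestrong} to control $\int_{F_k\times X} g\,d\mu_n^{\omega,x}$, and controls the remainder $\int_{F_k^c\times X} g\,d\mu_n^{\omega,x}$ via Birkhoff's ergodic theorem for the indicator of $F_k^c$; letting $k\to\infty$ finishes. Your approach instead doubles the fiber and replaces the discontinuous test function $g(\omega,x)=d(x,\Phi(\omega))$ by the continuous $(\omega,x,y)\mapsto d(x,y)$, then identifies any subsequential limit of the doubled empirical measures via the marginal/coupling argument (both $\Omega\times X$-marginals equal $\mu$, hence the fiberwise disintegration must be $\delta_{(\Phi(\omega),\Phi(\omega))}$).

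What each buys: the paper's method is a robust, general-purpose truncation scheme for pushing measurable observables through weak$*$ limits, but it requires the extra bookkeeping of Lusin sets and a separate invocation of Birkhoff. Your doubling device is more conceptual and shorter; it avoids Lusin entirely, and in fact proves along the way that the doubled random product $\widehat\varphi$ has a unique invariant measure $\widehat\mu$. Note also that your step~3 with arbitrary $y$ (not just $y=\Phi(\omega)$) and the continuous test function $d(x,y)$ already yields the conclusion of Theorem~\ref{contractible} directly, so in your presentation Lemma~\ref{principall} becomes a corollary rather than a prerequisite.
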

 \begin{proof}

 Since $\Omega$ is a compact metric 
space, given $k>0$ it follows from Lusin's theorem that 
there exists a compact set $F_{k}\subset\Omega$ such that $\Phi_{|F_{k}}$ is a continuous 
map and $\mathbb{P}(F_{k}^{c})<\frac{1}{k}$. Note that 
$g$ is continuous on $F_{k}\times X$. 

\begin{claim}\label{Alexandrov}
$g \mathds{1}_{F_{k}\times X}$ is upper semi-continuous.
\end{claim}
\begin{proof}
Given $\alpha>0$ we need to see that the set $A_{\alpha}=\{(\omega,x)\in \Omega\times X\colon g(\omega,x)<\alpha\}$ is an 
open set. If $A_{\alpha}$ is not empty, given a point $(\xi,y)\in A_{\alpha}$, since $g_{|F_{k}\times X}$ is continuous there is an open neighbourhood $V$ of $(\xi,y)$ such that 
$$
g \mathds{1}_{F_{k}\times X}(\omega,x)=g(\omega,x)<\alpha\quad \mbox{for every}\quad (\omega,x)\in V\cap F_{k}\times X.
$$
On the other hand, if $(\omega,x)\notin F_{k}\times X$ then  $g \mathds{1}_{F_{k}\times X}(\omega,x)=0$, proving 
the claim.
\end{proof}
By Theorem \ref{Breimanmorestrong}, there exists a set $\Lambda\subset \Omega$ with $\mathbb{P}$-full measure 
 such that 
 $
 \lim_{n\to \infty}\mu_{n}^{\omega,x}= \mu
 $
 for every $(\omega,x)\in \Lambda\times X$. Then, it follows from
Alexandrov's theorem and Claim \ref{Alexandrov} that 
\begin{equation}\label{limsup2}
\limsup_{n\to \infty} \int g\mathds{1}_{F_{k}\times X}\, d\mu_{n}^{\omega,x}\leq  \int g\mathds{1}_{F_{k}\times X}\,d\mu\leq 
\int g \, d\mu
\end{equation}
for every $(\omega,x)\in \Lambda\times X$.

Now let us compute $\int g\, d\mu$. The uniqueness of $\mu$ and the invariance of $\Phi$ implies 
that the disintegration of $\mu$ with respect to $\mathbb{P}$ is given by $\omega\mapsto \delta_{\Phi(\omega)}$. 
Hence,
\[
\begin{split}
\int g(\omega,x)\,d\mu(\omega,x)
&=
\int_{\Omega}\left(\int_{X}g(\omega,x)d\delta_{\Phi(\omega)}(x)\right)
d\mathbb{P}(\omega)\\
&=\int_{\Omega}g(\omega,\Phi(\omega))
d\mathbb{P}(\omega)=0.
\end{split}
\]
Thus, by \eqref{limsup2} we deduce that $\limsup_{n\to \infty} \int g\mathds{1}_{F_{k}\times X}\, d\mu_{n}^{\omega,x}=0$ 
for every $(\omega,x)\in \Lambda\times X$.

Since $X$ is  a compact metric space, we have
$L\eqdef\sup_{x,y\in X} d(x,y)< \infty$. Hence for every $(\omega,x)\in \Lambda\times X$ we have 
\begin{equation}\label{limsup}
\begin{split}
\limsup_{n\to \infty} \int g\, d\mu_{n}^{\omega,x}
&= \limsup_{n\to \infty}\left( \int_{F_{k}\times X} g\, d\mu_{n}^{\omega,x}+
\int_{F_{k}^{c}\times X}g\, d\mu_{n}^{\omega,x}\right)\\
&\leq\limsup_{n\to \infty}\int_{F_{k}\times X} g\, d\mu_{n}^{\omega,x}+\limsup_{n\to \infty} \int_{F_{k}^{c}\times X} g\, d\mu_{n}^{\omega,x}\\
&\leq L \limsup_{n\to \infty}\mu_{n}^{\omega,x}(F_{k}^{c}\times X).
\end{split}
\end{equation}

\begin{claim}\label{Fk}
There exists a set $\Lambda'\subset \Omega$ with $\mathbb{P}$-full measure such that 
$$
\lim_{n\to \infty}\mu_{n}^{\omega,x}(F_{k}^{c}\times X)=\mathbb{P}(F_{k}^{c}) 
$$
for every $\omega\in \Lambda'$, every $x\in X$ and every $k\geq 1$.
\end{claim}

\begin{proof}

It follows from the definition of $\mu_{n}^{\omega,x}$ that for every $(\omega,x)\in \Omega\times X$ and every $k\geq 1$ we have
$$
\mu_{n}^{\omega,x}(F_{k}^{c}\times X)=\frac{1}{n}\sum_{i=0}^{n-1}\delta_{\theta^{i}(\omega)}(F_{k}^{c}).
$$
By Birkhoff's ergodic theorem there is a set $\Lambda_{k}$ with 
$\mathbb{P}(\Lambda_{k})=1$ and such that for every $\omega\in \Lambda_{k}$ it holds 
$$
\lim_{n\to \infty}\frac{1}{n}\sum_{i=0}^{n-1}\delta_{\theta^{i}(\omega)}(F_{k}^{c})=\mathbb{P}(F_{k}^{c}).
$$
Thus, defining $\Lambda'\eqdef \cap_{k\geq 1} \Lambda_{k}$ we have 
$$
\lim_{n\to \infty}\mu_{n}^{\omega,x}(F_{k}^{c}\times X)=\mathbb{P}(F_{k}^{c})
$$
for every $(\omega,x)\in \Lambda'\times X$ and every $k\geq 1$, ending the proof of the claim.
\end{proof}

Now, setting $\Omega'\eqdef\Lambda'\cap\Lambda$, it follows from \eqref{limsup} and Claim \ref{Fk} that
$$
\limsup_{n\to \infty} \int g\, d\mu_{n}^{\omega,x}\leq L \,\mathbb{P}(F_{k}^{c})\leq L\frac{1}{k}
$$
for every $(\omega,x)\in \Omega'\cap X$ and
 every $k\geq 1$. Hence, from \eqref{limsup} we have 
$$
\lim_{n\to \infty} \int g\, d\mu_{n}^{\omega,x}=0
$$
for every $(\omega,x)\in \Omega'\cap X$.
The proof of the lemma is now complete.
\end{proof}

We now conclude the proof of the theorem. First, note that for
 every $\omega\in \Omega$ and every pair $x,y\in X$ we have
$$
\frac{1}{n}\sum_{i=0}^{n-1}d(f_{\omega}^{i}(x),f_{\omega}^{i}(y))\leq \frac{1}{n}\sum_{i=0}^{n-1}d(f^{j}_{\omega}(x),\Phi(\theta^{j}(\omega)))+ \frac{1}{n}\sum_{i=0}^{n-1}d(f^{j}_{\omega}(y),\Phi(\theta^{j}(\omega))),
$$
and it follows from the definition of $\mu_{n}^{\omega,z}$ that
$$
\int g \, d\mu_{n}^{\omega,z}=\frac{1}{n}\sum_{i=0}^{n-1}d(f^{j}_{\omega}(z),\Phi(\theta^{j}(\omega))),
$$
for every $\omega$ and every $z\in X$. Then from Lemma \ref{principall} we deduce that
for $\mathbb{P}$-almost every $\omega$ in $\Omega$
$$
\lim_{n\to \infty}\frac{1}{n}\sum_{i=0}^{n-1}d(f_{\omega}^{i}(x),f_{\omega}^{i}(y))=0,
$$
for every $x,y\in X$.
The proof of the theorem is now complete.

\hfill \qed
\section{Proof of Theorem \ref{Strong}}\label{s.strong}
We first prove that (1)$\Rightarrow$(2). For that, consider the sequence of measurable functions $g_n:\Omega \rightarrow [0,+\infty)$ defined by
$
g_n(\omega)\eqdef \mathrm{diam}\,(f_{\omega}^{n}(X))  
$, $n\geq 1$, and observe that,
\[
\begin{split}
g_{n+1}(\omega) &= \mathrm{diam}\,(f_{\theta^{n}(\omega)}\circ \cdots\circ f_{\omega}(X)) \\
&= \mathrm{diam}\, (f_{\theta^{n+1}(\omega)}\circ \cdots\circ f_{\theta(\omega)}(f_{\omega}(X)))\\
&\leq g_n(\theta(\omega)),
\end{split}
\]
for every $\omega\in\Omega$ and every $n\geq1$. Hence, for $m,k\in \mathbb{N}$ we have $g_{m+k}(\omega) \leq g_{m}(\theta^k(\omega))$. Define $u_n(\omega) = \displaystyle\sum_{j=1}^n g_j(\omega)$, and note that
$$
u_{m+n} =\displaystyle\sum_{j=1}^{m+n} g_j
= u_m + \displaystyle\sum_{j=m+1}^{m+n} g_j
\leq u_m + \displaystyle\sum_{i=1}^{n} g_i \circ \theta^m= u_m + u_n \circ \theta^m,
$$
which implies that the sequence $(u_n)_n$ is subadditive. Thus, by Kingman's subadditive ergodic theorem
 the sequence $(u_n/n)_n$ converges $\mathbb{P}$-almost everywhere to a non-negative invariant function $g$ such that
\begin{equation}\label{Kingman}
\int g\, d\mathbb{P} =\lim_{n\to\infty}  \frac{1}{n} 
\int u_n\, d\mathbb{P} .
\end{equation}

Now consider the sequence of measurable functions $h_{n}\colon \Omega\to [0,+\infty)$ defined by
 $$
 h_{n}(\omega)\eqdef \mbox{diam}\,f_{\theta^{-n}(\omega)}^{n}(X)=\mbox{diam}\,f_{\theta^{-1}(\omega)}\circ \dots \circ f_{\theta^{-n}(\omega)}(X)
 $$ 
 By hypothesis, we have that  $\lim_{n\to \infty} h_{n}(\omega)=0$ for $\mathbb{P}$-almost every $\omega$. In particular, 
 it follows from the dominated convergence theorem
 that
\begin{equation}\label{hypooo}
\lim_{n\to+\infty}\frac{1}{n}\sum_{j=1}^n \int h_j \, d\mathbb{P}=0,
\end{equation}
for $\mathbb{P}$-almost every $\omega$. Also, noting that
$
g_{n}\circ \theta^{-n}=h_{n}
$,
and using the invariance of $\theta$ we have 
\begin{equation}\label{igual}
 \int h_i \, d\mathbb{P} = \int g_i \, d\mathbb{P}
\end{equation}
for every $i\geq 1$. Therefore, from \eqref{Kingman} and \eqref{hypooo} we get
\[
\begin{split}
		\int g \, d\mathbb{P} &= \lim_{n\to+\infty} \frac{1}{n} \int u_n \, d\mathbb{P}\\
		&= \lim_{n\to+\infty}\frac{1}{n} \sum_{j=1}^n \int g_j \, d\mathbb{P}
		=
		 \lim_{n\to+\infty}\frac{1}{n} \sum_{j=1}^n \int h_j \, d\mathbb{P}=0.
	\end{split}
	\]
Since $g(\omega)\geq 0$ for $\mathbb{P}$-almost every $\omega$, we conclude that 
$g(\omega)=0$ for $\mathbb{P}$-almost every $\omega$, which proves that (1)$\Rightarrow$(2).

	We now prove that (2) $\Rightarrow$ (1). First note that $h_{n+1}(\omega)\leq h_{n}(\omega)$ for every 
	$n\geq 1$ and $\omega\in\Omega$. Hence the limit
	$$
	h(\omega)\eqdef \lim_{n\to \infty} h_{n}(\omega)
	$$
	exists for every $\omega\in \Omega$. In particular, using the dominated convergence theorem and \eqref{igual}
	we have
	$$
	\int h(\omega)\,d\mathbb{P}=\lim_{n\to \infty}\int\frac{1}{n} \sum_{i=1}^n  h_i \,
		 d\mathbb{P}=\lim_{n\to \infty}\int \frac{1}{n} \sum_{i=1}^n  g_i\,
		 d\mathbb{P}=0,
	$$
where the last equality follows from the hypothesis  $\lim_{n\to \infty}\frac{1}{n}\sum_{i=1}^{n} g_{i}(\omega)=0$
for $\mathbb{P}$-almost every $\omega$.
Since $h(\omega)\geq 0$ for every $\omega$, we conclude that $h(\omega)=0$ for $\mathbb{P}$-almost every $\omega$, which completes 
the proof of (2)$\Rightarrow$  (1).	
          
\hfill \qed

\bibliographystyle{acm}
\bibliography{references}

\end{document}